\newcommand{\intd}{\, \textnormal{d}}
\newcommand{\sgn}{\textnormal{sign}}
\newcommand{\mb}[1]{\mathbf{#1}}
\newcommand{\mbb}[1]{\mathbb{#1}}
\newcommand{\mcl}[1]{\mathcal{#1}}
\newcommand{\mnm}[1]{\textnormal{#1}}
\newcommand{\mfk}[1]{\mathfrak{#1}}
\def\Xint#1{\mathchoice
{\XXint\displaystyle\textstyle{#1}}%
{\XXint\textstyle\scriptstyle{#1}}%
{\XXint\scriptstyle\scriptscriptstyle{#1}}%
{\XXint\scriptscriptstyle\scriptscriptstyle{#1}}%
\!\int}
\def\XXint#1#2#3{{\setbox0=\hbox{$#1{#2#3}{\int}$}
\vcenter{\hbox{$#2#3$}}\kern-.5\wd0}}
\def\dashint{\Xint-}
\newtheorem{thm}{Theorem}[section]
\newtheorem{crl}[thm]{Corollary}
\newtheorem{lmm}[thm]{Lemma}
\newtheorem{prp}[thm]{Proposition}
\theoremstyle{definition}
\theoremstyle{remark}
\begin{document}

\title{Characterizations of the hydrodynamic limit of the Dyson model}

%{RIMS K\^oky\^uroku Bessatsu sample}

%\TitleHead{Characterizations of the hydrodynamic limit of the Dyson model}          %optional
%\dedicatory{Here is a Dedication}           %optional

%\date{\today}

\author{Sergio Andraus}
\email{andraus@spin.phys.s.u-tokyo.ac.jp}
\affiliation{Department of Physics, Graduate School of Science, The University of Tokyo, 7-3-1 Hongo, Bunkyo-ku, Tokyo 113-0033, Japan.}

\author{Makoto Katori}
\email{katori@phys.chuo-u.ac.jp}
%\thanks{}  %optional
\affiliation{Department of Physics, Graduate School of Science and Engineering, Chuo University, 1-13-27 Kasuga, Bunkyo-ku, Tokyo 112-8551, Japan.}
%\AuthorHead{S. Andraus, M. Katori}         %optional but recommended
%\pacs{MACS2010 classification: 82C22, 15B52, 35Q31, 44A15.}

\keywords{\textit{Interacting Brownian motions, Dyson model, Hydrodynamic limit, 
Hilbert transform, Green's function, Method of characteristics}}         %optional

\begin{abstract}      %optional
Under appropriate conditions for the initial configuration,
the empirical measure of the $N$-particle Dyson model
with parameter $\beta \geq 1$ converges to a unique
measure-valued process as $N$ goes to infinity,
which is independent of $\beta$.
The limit process is characterized by its Stieltjes transform
called the Green's function.
Since the Green's function satisfies the complex Burgers
equation in the inviscid limit,
this is called the hydrodynamic limit of the Dyson model.
We review the relations among the hydrodynamic equation
of the Green's function, 
the continuity equation of the probability density function,
and the functional equation of the Green's function.
The basic tools to prove the relations are
the Hilbert transform, a special case of the Sokhotski-Plemelj
theorem,
and the method of characteristics for solving 
partial differential equations.
For two special initial configurations,
we demonstrate how to characterize the limit processes
using these relations.
\end{abstract}

%
% The text goes here.  
% Be sure to use the appropriate "theorem-like" environment as 
% is the following examples.  Never use plain TeX commands for these, as
% they will cause interference with the styles of other papers. 

\maketitle

%\tableofcontents      %optional

%%%%%%%%%%%%%%%%%%%%%%%%%%%%%%%%%%%%%%%%%%%%%%%%
\section{Introduction}\label{sec:intro}
%%%%%%%%%%%%%%%%%%%%%%%%%%%%%%%%%%%%%%%%%%%%%%%%
For $N \in \mbb{N}$, let
$\mb{B}(t)=(B_1(t), B_2(t), \dots, B_N(t)), t \geq 0$ be an $N$-dimensional
standard Brownian motion in a probability space $(\Omega, \mnm{P})$
with a filtration 
$\mcl{F}=\{\mcl{F}_t : t \geq 0\}$.
Let $\mbb{W}_N$ denote the Weyl chamber of type A,
$$
\mbb{W}_N= \{ \mb{x}=(x_1, x_2, \dots, x_N) \in \mbb{R}^N :
x_1 < x_2 < \cdots < x_N \}
$$
with closure $\overline{\mbb{W}}_N$.
Dyson's Brownian motion model of $N$ particles 
with parameter $\beta \geq 1$,
$\mb{X}^N(t)=(X^N_1(t), X^N_2(t), \dots, X^N_N(t)), t \geq 0$,
is defined as the solution of the following system of
stochastic differential equations (SDEs) \cite{dyson62},
$$
dX^N_i(t)=dB_i(t)+\frac{\beta}{2} \sum_{\substack{1 \leq j \leq N, \cr j \not=i}}
\frac{dt}{X^N_i(t)-X^N_j(t)}, \quad t \geq 0,
\quad 1 \leq i \leq N,
$$
started at $\mb{x}^N \in \overline{\mbb{W}}_N$.
Here we perform the time change
$$
\frac{\beta N}{2}t \to t
$$
and consider the situation such that the SDEs are given as
\begin{equation}
dX^N_i(t)=\sqrt{\frac{2}{\beta N}} dB_i(t)
+\frac{1}{N} \sum_{\substack{1 \leq j \leq N, \cr j \not=i}}
\frac{dt}{X^N_i(t)-X^N_j(t)}, \quad t \geq 0,
\quad 1 \leq i \leq N,
\label{eq:Dyson2}
\end{equation}
with $\mb{X}^N(0)=\mb{x}^N \in \overline{\mbb{W}}_N$. 
In this paper we simply call this system
of interacting Brownian motions on $\mbb{R}$ 
{\it the Dyson model} \cite{AGZ10,katori16}.

Let $\mfk{M}$ be the space of probability measures on $\mbb{R}$
equipped with its weak topology.
For $T > 0$, $\mnm{C}([0, T] \to \mfk{M})$ denotes the
space of continuous processes defined in the time period $[0, T]$ 
realized in $\mfk{M}$.
We regard the empirical measure
of the solution $\mb{X}^N(t)=(X^N_1(t), \dots, X^N_N(t))$ of \eqref{eq:Dyson2}, 
\begin{equation}
\Xi^N(t, \cdot)=\frac{1}{N} \sum_{i=1}^N \delta_{X^N_i(t)}(\cdot), \quad t \in [0, T], 
\label{eq:XiN}
\end{equation}
as an element of $\mnm{C}([0, T] \to \mfk{M})$.
We express its initial configuration by
\begin{equation}
\xi^N(\cdot) \equiv \Xi^N(0, \cdot)=\frac{1}{N} \sum_{i=1}^N \delta_{x^N_i}(\cdot) \in \mfk{M}.
\label{eq:xiN}
\end{equation}
The following is proved.

%%%%%%%%%%%%%%%%%%%%%%%%%%%%%%%%%%%%%%%
\begin{thm}[Anderson, Guionnet, Zeitouni {\cite[Proposition 4.3.10]{AGZ10}}]
\label{thm:AGZ1}

Let $(\mb{x}^N)_{N \in \mbb{N}}$ be a sequence of initial configurations such that
$\mb{x}^N \in \overline{\mbb{W}}_N$, 
$$
\sup_{N \geq 0} \frac{1}{N} \sum_{i=1}^N \log \{(x^N_i)^2+1\} < \infty,
$$
and $\xi^N(\cdot)$ converges weakly to a measure $\mu(\cdot) \in \mfk{M}$
as $N \to \infty$. 
Then for any fixed $T < \infty$,
\begin{equation}
(\Xi^N(t, \cdot))_{t \in [0, T]}
\Longrightarrow ^{\exists !} (\mu(t, \cdot))_{t \in [0, T]}
\quad \mbox{a.s. in $\mnm{C}([0, T] \to \mfk{M})$},
\label{eq:converge1}
\end{equation}
where $\mu(0, \cdot) =\mu(\cdot)$ and the function
\begin{equation}
G(t, z)=\int_{\mbb{R}} \frac{d\mu(t, x)}{z-x}
\label{eq:G1}
\end{equation}
satisfies the equation
\begin{equation}
\frac{\partial G(t,z)}{\partial t} 
+G(t,z) \frac{\partial G(t,z)}{\partial z} =0,
\quad t \in [0, T], 
\quad z \in \mbb{C} \setminus \mbb{R}.
\label{eq:G2}
\end{equation}
\end{thm}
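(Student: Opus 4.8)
The plan is to track the empirical Stieltjes transform of the solution of \eqref{eq:Dyson2},
\begin{equation}
G^N(t,z) = \int_{\mbb{R}}\frac{\Xi^N(t,dx)}{z-x} = \frac1N\sum_{i=1}^N \frac{1}{z - X^N_i(t)}, \qquad z \in \mbb{C}\setminus\mbb{R},
\label{eq:GNplan}
\end{equation}
and to show that as $N\to\infty$ it converges to a deterministic limit $G(t,z)$ solving \eqref{eq:G2}; the convergence of $\Xi^N$ to $\mu(t,\cdot)$ then follows because a probability measure is determined by its Stieltjes transform and pointwise convergence on $\mbb{C}\setminus\mbb{R}$ implies weak convergence. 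First I would apply It\^o's formula to $f(x)=(z-x)^{-1}$, with $f'(x)=(z-x)^{-2}$ and $f''(x)=2(z-x)^{-3}$, along each coordinate $X^N_i$. Summing over $i$ and dividing by $N$ gives
\begin{equation}
dG^N(t,z) = dM^N(t,z) + \frac1{N^2}\sum_{\substack{1\le i,j\le N, \cr j\ne i}}\frac{dt}{(z-X^N_i)^2(X^N_i - X^N_j)} + \frac{2}{\beta N^2}\sum_{i=1}^N \frac{dt}{(z-X^N_i)^3},
\label{eq:itoGN}
\end{equation}
where $dM^N(t,z)=\sqrt{2/(\beta N)}\,N^{-1}\sum_i (z-X^N_i)^{-2}\,dB_i$ is a martingale in $t$.

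The key algebraic step is to symmetrize the interaction drift. Exchanging $i\leftrightarrow j$ and using the partial-fraction identity
\begin{equation}
\frac{1}{X^N_i - X^N_j}\left[\frac{1}{(z-X^N_i)^2} - \frac{1}{(z-X^N_j)^2}\right] = \frac{(z-X^N_i)+(z-X^N_j)}{(z-X^N_i)^2(z-X^N_j)^2},
\label{eq:pf}
\end{equation}
one rewrites the double sum so that it reproduces, up to its diagonal, the nonlinear term
\begin{equation}
G^N\frac{\partial G^N}{\partial z} = -\frac{1}{N^2}\sum_{1\le i,j\le N}\frac{1}{(z-X^N_i)(z-X^N_j)^2}.
\label{eq:nonlin}
\end{equation}
Collecting terms, \eqref{eq:itoGN} becomes
\begin{equation}
dG^N(t,z) = dM^N(t,z) - G^N\frac{\partial G^N}{\partial z}\,dt + \left(\frac2\beta - 1\right)\frac1{N^2}\sum_{i=1}^N\frac{dt}{(z-X^N_i)^3}.
\label{eq:burgersN}
\end{equation}
For fixed $z$ with $\mnm{Im}\,z=\eta\ne 0$, the last drift is bounded by $|2/\beta-1|\,(N\eta^3)^{-1}$ and the quadratic variation of $M^N$ by $2t/(\beta N^2\eta^4)$; both are $O(1/N)$ and vanish. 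Note that $\beta$ enters only through these negligible terms, which is why the limit equation \eqref{eq:G2} is independent of $\beta$.

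Next I would establish tightness of $(\Xi^N)_{N}$ in $\mnm{C}([0,T]\to\mfk{M})$. The hypothesis $\sup_N N^{-1}\sum_i \log\{(x^N_i)^2+1\}<\infty$ controls the mass near infinity uniformly in $N$; propagated in time through the semimartingale decomposition of $\langle\Xi^N(t),g\rangle$ for smooth $g$ together with a Gronwall estimate, it yields compact containment, while equicontinuity in $t$ follows by bounding the finite-variation and martingale parts of the same decomposition. By Prohorov's theorem the laws are relatively compact, and along any subsequential limit $\mu(t,\cdot)$, letting $N\to\infty$ in \eqref{eq:burgersN} shows that its Stieltjes transform \eqref{eq:G1} satisfies \eqref{eq:G2}.

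Finally, uniqueness. Equation \eqref{eq:G2} is the inviscid complex Burgers equation, which I would solve by the method of characteristics: $G$ is constant along the curves $z(t)=z_0 + t\,G(0,z_0)$, giving the implicit relation $G(t,z)=G(0,\,z-tG(t,z))$ with $G(0,z)=\int_{\mbb{R}}(z-x)^{-1}d\mu(x)$. Since $G(0,\cdot)$ maps the upper half-plane into the lower half-plane (it is the Stieltjes transform of a probability measure), this characteristic solution is the unique one within that class of Nevanlinna functions on $\mbb{C}\setminus\mbb{R}$, and uniqueness of $G$ forces uniqueness of $\mu(t,\cdot)$. Hence every subsequential limit coincides and the full sequence converges; because the fluctuation terms in \eqref{eq:burgersN} obey summable $O(1/N)$ bounds in $L^2$, a Borel--Cantelli argument upgrades this to almost sure convergence. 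I expect the main obstacle to be this uniqueness step together with the tightness: the first-order nonlinear Burgers equation develops shocks as characteristics cross on $\mbb{R}$, so the argument must remain in $\mbb{C}\setminus\mbb{R}$ and exploit the Nevanlinna structure, while the tightness must extract the needed regularity from the single log-moment bound rather than from higher moments.
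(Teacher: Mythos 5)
The paper contains no proof of this theorem to compare against: it is imported verbatim, with attribution, from Anderson--Guionnet--Zeitouni \cite{AGZ10} (their Proposition 4.3.10), and the paper's own contribution (Theorem \ref{thm:main}) simply takes \eqref{eq:G2} as its starting point. So the meaningful comparison is with the proof in the cited source, and your proposal is essentially a reconstruction of that argument, with one organizational difference: AGZ run Itô's formula on $\langle \Xi^N(t), f\rangle$ for general bounded $C^2$ test functions, symmetrize the interaction drift into the divided-difference kernel $\tfrac{1}{2}\{f'(x)-f'(y)\}/(x-y)$, prove tightness using the Lyapunov function $\log(1+x^2)$ (which is exactly why the hypothesis is stated in that form), identify subsequential limits through the resulting weak integral equation, and only afterwards substitute $f=(z-\cdot)^{-1}$ to obtain \eqref{eq:G2} and run the uniqueness argument; you instead specialize to the resolvent from the outset. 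Your algebra is correct -- your partial-fraction identity is precisely the divided-difference kernel at $f=(z-\cdot)^{-1}$, and the diagonal term it produces combines with the Itô correction into $(2/\beta-1)N^{-2}\sum_i(z-X^N_i(t))^{-3}$, which cleanly explains the $\beta$-independence of the limit. What the test-function route buys is the tightness step: compact containment in $\mnm{C}([0,T]\to\mfk{M})$ cannot be extracted from the resolvent alone, so your step on tightness necessarily reverts to general test functions and the logarithmic moment bound, as your sketch implicitly concedes; what your route buys is a shorter, more transparent path to the limit PDE. The two genuinely hard points are the same in either formulation and remain sketches in your proposal, as you honestly flag: propagating the single logarithmic moment through a Gronwall argument to get compact containment, and the uniqueness lemma for the characteristic/functional equation $G(t,z)=G(0,z-tG(t,z))$ within the class of Stieltjes transforms, where the needed mechanism is that $\Im G(t,z)<0$ on $\mbb{C}_+$ forces the backward characteristic $z\mapsto z-tG(t,z)$ to move away from the real axis, and an injectivity argument must then be completed to conclude that subsequential limits agree and are deterministic.
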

%%%%%%%%%%%%%%%%%%%%%%%%%%%%%%%%
\vskip 0.3cm

The function $(G(t, \cdot))_{t \in [0, T]}$ defined by 
the {\it Stieltjes transform} \eqref{eq:G1} is called
the {\it Green's function} (or the {\it resolvent}) 
for the measure-valued process $(\mu(t, \cdot))_{t \in [0, T]}$.
The equation \eqref{eq:G2} can be regarded as the
{\it complex Burgers equation} in the inviscid limit
({\it i.e.}, the (complex) one-dimensional Euler equation).
Thus the $N \to \infty$ limit given by this theorem
is called the {\it hydrodynamic limit of the Dyson model}
\cite{AGZ10,blaizotnowak10,forrestergrela15}. 

For $x \in \mbb{R}$, assume that a function $F$ is bounded and
integrable over $(-\infty, x-\varepsilon]$ and $[x+\varepsilon, \infty)$
for any $\varepsilon >0$. 
We introduce the Cauchy principal value at $x$ for the integral of $F$ over 
$\mbb{R} \setminus \{x\}$ as
$$
\dashint_{\mbb{R} \setminus \{x\}} F(y) d y
:=\lim_{\epsilon \downarrow 0}
\left\{ 
\int_{-\infty}^{x-\varepsilon} F(y) d y
+\int^{\infty}_{x+\varepsilon} F(y) d y \right\},
$$
if the limit exists and is finite. 
For $f \in L^p(\mbb{R}), 1 < p < \infty$, the Hilbert transform is defined by 
\begin{equation}
\mcl{H}[f](x):=\frac{1}{\pi}\dashint_{\mbb{R} \setminus \{x\}} 
\frac{f(y)}{x-y} d y, \quad x \in \mbb{R}.
\label{eq:Hilbert1}
\end{equation}
Note that $\mcl{H}[f](x)$ is real-valued for all $x \in \mbb{R}, f \in L^p(\mbb{R})$,
$1 < p < \infty$ by definition.

In this paper we consider the case in which $(\mu(t, \cdot))_{t \in [0, T]}$
has a probability density function for any $T < \infty$. 
That is, we can write
$$
\mu(t, A)=\int_A \rho(t, x) dx,
\quad t \in [0, \infty),
$$
for any Borel set $A \in \mfk{B}(\mbb{R})$.
We assume the following conditions for $\rho$.
\begin{description}
\item[{\bf [C1]}] \quad
$\rho(t, x)$ is piecewise differentiable with respect to 
$t \in (0, \infty)$ and $x \in \mbb{R}$;
the piecewise-defined derivatives are written as
$\displaystyle{\frac{\partial \rho(t, x)}{\partial t}}$,
$\displaystyle{\frac{\partial \rho(t, x)}{\partial x}}$.
\item[{\bf [C2]}] \quad
$\rho(t, \cdot)$, $\displaystyle{\frac{\partial \rho(t, \cdot)}{\partial t}}$,
$\displaystyle{\frac{\partial \rho(t, \cdot)}{\partial x} \in L^p(\mbb{R}), 1 < p < \infty}$,
for each $t \in (0, \infty)$.
Thus, their Hilbert transforms are well-defined.
\end{description}
From now on, for piecewise differentiable functions, 
the derivatives are assumed to be piecewise-defined.

One of the purposes of the present paper is to give
a precise proof to the following statement.

%%%%%%%%%%%%%%%%%%%%%%%%%%%%%%%%%%%%%%%%%%%
\begin{thm}
\label{thm:main}
Assume that the Green's function for the process
$(\mu(t, \cdot))_{t \in [0, \infty)}$
is given by
\begin{equation}
G(t, z)=\int_{\mbb{R}} \frac{\rho(t,x)}{z-x} dx,
\quad t \in [0, \infty), \quad z \in \mbb{C} \setminus \mbb{R}, 
\label{eq:G3}
\end{equation}
where
$\rho$ satisfies the conditions {\bf [C1]} and {\bf [C2]}. 
Then the partial differential equation \eqref{eq:G2} for $G$
leads to the following two characterizations for the process.
\begin{description}
\item[{\rm (i)}] \quad
The probability density function $\rho$ satisfies the following equation
\begin{equation}
\frac{\partial \rho(t, x)}{\partial t} 
+ \pi \frac{\partial}{\partial x} \Big\{
\rho(t, x) \mcl{H}[\rho(t, \cdot)](x) \Big\} =0.
\label{eq:rho2}
\end{equation}
\item[{\rm (ii)}] \quad
The following functional equation is solved by $G$, 
\begin{equation}
G(t, z)=G \Big(0, z-t G(t,z) \Big), \quad
t \in [0, \infty).
\label{eq:functional}
\end{equation}
\end{description}
\end{thm}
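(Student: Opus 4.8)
The plan is to establish the two characterizations separately, obtaining (i) from the boundary values of $G$ on the real axis via a special case of the Sokhotski--Plemelj theorem, and (ii) from the method of characteristics applied to the complex Burgers equation \eqref{eq:G2}.

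For part (i), the first step is to compute the boundary values of $G$ from \eqref{eq:G3}. Writing $z=x\pm i\varepsilon$ and letting $\varepsilon\downarrow 0$, the Sokhotski--Plemelj formula applied to \eqref{eq:G3} yields
\[
G(t,x\pm i0)=\pi\,\mcl{H}[\rho(t,\cdot)](x)\mp i\pi\,\rho(t,x),
\]
where the real part is the Cauchy principal value identified with the Hilbert transform through \eqref{eq:Hilbert1}, and the imaginary part is the delta-function contribution. Conditions {\bf [C1]}--{\bf [C2]} guarantee that these limits, and the analogous limits of $\partial_t G$ and $\partial_z G$, exist and that all Hilbert transforms appearing below are well defined. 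The second step is to substitute these boundary values into \eqref{eq:G2}. Using that $\partial_z G(t,x\pm i0)=\partial_x G(t,x\pm i0)$ (since $G$ is analytic, displacing $z$ in the real direction), together with the commutation $\partial_x\mcl{H}[\rho]=\mcl{H}[\partial_x\rho]$, the identity $\partial_t G+G\,\partial_z G=0$ splits into real and imaginary parts. The imaginary part, after the Leibniz identity $\mcl{H}[\rho]\,\partial_x\rho+\rho\,\mcl{H}[\partial_x\rho]=\partial_x\{\rho\,\mcl{H}[\rho]\}$ and division by $-\pi$, is precisely \eqref{eq:rho2}; the real part produces a companion identity that is automatically consistent through standard Hilbert-transform relations.

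For part (ii), I would treat \eqref{eq:G2} as a first-order quasilinear equation in the complex variable $z$ and apply the method of characteristics. Fixing a target $(t,z)$ with $z\in\mbb{C}\setminus\mbb{R}$, I would solve the characteristic ODE $\dot{Z}(s)=G(s,Z(s))$ backward from $Z(t)=z$. Along this curve $\frac{d}{ds}G(s,Z(s))=\partial_t G+\partial_z G\,\dot{Z}=\partial_t G+G\,\partial_z G=0$, so $G$ is constant along it, hence $\dot{Z}$ is constant and the characteristic is the straight line $Z(s)=z+(s-t)\,G(t,z)$. Setting $\zeta:=Z(0)=z-t\,G(t,z)$ and using $G(0,\zeta)=G(t,z)$ gives exactly the functional equation \eqref{eq:functional}.

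The main obstacle in part (i) is the rigorous justification of the boundary-value limits and of the derivative/Hilbert-transform interchanges under only the piecewise differentiability of {\bf [C1]}--{\bf [C2]}. The main obstacle in part (ii) is the validity of the characteristic construction in the complex domain: one must verify that the backward characteristic remains in $\mbb{C}\setminus\mbb{R}$, so that $G$ stays analytic along it and the ODE admits a solution on all of $[0,t]$. This is controlled by the sign of the imaginary part of $G$: from \eqref{eq:G3} one has $\mathrm{Im}\,G<0$ in the upper half-plane and $\mathrm{Im}\,G>0$ in the lower half-plane, so the trajectory moves away from $\mbb{R}$ as $s$ decreases, which secures global existence on $[0,t]$ and completes the argument.
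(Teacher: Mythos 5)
Your proposal is correct and follows essentially the same route as the paper: part (i) substitutes the Sokhotski--Plemelj boundary values $G(t,x+\sqrt{-1}\,0)=\pi\mcl{H}[\rho(t,\cdot)](x)-\sqrt{-1}\,\pi\rho(t,x)$ into the complex Burgers equation and separates real and imaginary parts (you read \eqref{eq:rho2} directly off the imaginary part, while the paper additionally verifies, via the Carton-Lebrun product identity and the invertibility of $\mcl{H}$, that the real part is exactly the Hilbert transform of the same quantity and hence consistent), and part (ii) is the same method of characteristics with straight-line characteristics, merely run backward from $(t,z)$ instead of forward from $(t_0,z_0)$. Your extra observation that $\sgn\bigl(\Im G(t,z)\bigr)=-\sgn(\Im z)$ forces the backward characteristic to move away from $\mbb{R}$, guaranteeing it stays in the domain of analyticity, is a point of rigor the paper leaves implicit.
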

%%%%%%%%%%%%%%%%%%%%%%%%%%%%%%%%%%%%%%%%%%
\vskip 0.3cm

Equation \eqref{eq:rho2} is the continuity equation
of the density function of the system,
$$
\frac{\partial \rho(t, x)}{\partial t} 
=-\frac{\partial J(t, x)}{\partial x},
\quad t \in [0, \infty), \quad x \in \mbb{R},
$$
with the current density function
\begin{eqnarray}
J(t, x) &=& \pi \rho(t, x) \mcl{H}[\rho(t, \cdot)](x)
\nonumber\\
&=& - \rho(t, x) \frac{\partial}{\partial x}
\int_{\mbb{R}} \rho(t, y) V(x-y) dy,
\nonumber
\end{eqnarray}
associated with the logarithmic potential
$V(x)=-\log|x|$.
In other words, \eqref{eq:rho2} will provide the
reaction-diffusion equation which governs the 
macroscopic behavior of the 
{\it one-dimensional log-gas} \cite{forrester10}.
An interesting and important fact is that the formula
\begin{equation}
\rho(t, x)=-
\Im \left[ \lim_{\varepsilon \downarrow 0}
\frac{1}{\pi} G(t, x+ \sqrt{-1} \varepsilon) \right],
\quad t \in [0, \infty), \quad x \in \mbb{R},
\label{eq:rho_G}
\end{equation}
is established
(see the comment given above Proposition \ref{prp:HilbertAvgResolvent}
in the next section) and 
it is easier to obtain $\rho$ 
by solving the functional equation \eqref{eq:functional}
for $G$ and using the formula \eqref{eq:rho_G}
rather than by solving \eqref{eq:rho2}. 
Another purpose of the present paper is
to demonstrate the usefulness of the 
functional equation \eqref{eq:functional}. 

The paper is organized as follows.
In Section \ref{sec:Hilbert} we prove the formula \eqref{eq:rho_G}
and Propositions concerning the basic properties of the Hilbert transform.
Section \ref{sec:proof} is devoted to proving Theorem \ref{thm:main}.
There we will use the properties of the Hilbert transform given in
Section \ref{sec:Hilbert} in order to prove (i) of Theorem \ref{thm:main}. 
Then the {\it method of characteristics} 
\cite{AGZ10,blaizotnowak08,blaizotnowak10,forrestergrela15}
is applied to prove (ii) of Theorem \ref{thm:main}.
In Section \ref{sec:solutions} we demonstrate how to
solve the functional equation \eqref{eq:functional} for $G$
and determine $\rho$ through the formula \eqref{eq:rho_G}
for two special cases of the initial data $\mu$.
In both cases, the support of $\rho$ is bounded on $\mbb{R}$
and the conditions {\bf [C1]} and {\bf [C2]} are clearly satisfied.

%%%%%%%%%%%%%%%%%%%%%%%%%%%%%%%%%%%%%%%%%%%%%%%%
\section{Basic properties of the Hilbert transform}\label{sec:Hilbert}
%%%%%%%%%%%%%%%%%%%%%%%%%%%%%%%%%%%%%%%%%%%%%%%%

The relation \eqref{eq:rho_G} will be obtained
as the imaginary part of the upper equation in \eqref{eq:Hilbert_rho}
in the following proposition.
This proposition is a special case of the Sokhotski-Plemelj theorem.

%%%%%%%%%%%%%%%%%%%%%%%%%%%%%%%%%%%%%%%%%%%%%%%%%%%%%%%%
\begin{prp}\label{prp:HilbertAvgResolvent}
For any $t \in [0, \infty)$, 
the Hilbert transform of $\rho(t, \cdot)$ and 
the Green's function $G$ are related by
\begin{equation}
\lim_{\varepsilon\downarrow 0}
\frac{1}{\pi} G(t, x \pm \sqrt{-1} \varepsilon)
= \mcl{H} [\rho(t, \cdot)](x) \mp \sqrt{-1} \rho(t, x),
\quad x \in \mbb{R}.
\label{eq:Hilbert_rho}
\end{equation}
\end{prp}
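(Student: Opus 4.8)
The plan is to substitute $z = x \pm \sqrt{-1}\,\varepsilon$ directly into the definition \eqref{eq:G3} and split the resulting integrand into its real and imaginary parts before taking $\varepsilon \downarrow 0$. Writing
$$
G(t, x \pm \sqrt{-1}\,\varepsilon)
= \int_{\mbb{R}} \frac{\rho(t,y)}{(x-y) \pm \sqrt{-1}\,\varepsilon}\, dy
= \int_{\mbb{R}} \rho(t,y)\,
\frac{(x-y) \mp \sqrt{-1}\,\varepsilon}{(x-y)^2 + \varepsilon^2}\, dy,
$$
I would treat the two resulting pieces separately: the real part carries the kernel $K_\varepsilon(x-y) = (x-y)/\{(x-y)^2+\varepsilon^2\}$, which is the regularized principal-value kernel, while the imaginary part carries the Lorentzian (Poisson) kernel $P_\varepsilon(x-y) = \varepsilon/\{(x-y)^2+\varepsilon^2\}$, which is a nascent delta.

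For the imaginary part I would first recall that $\frac{1}{\pi}\int_{\mbb{R}} P_\varepsilon(x-y)\, dy = 1$ for every $\varepsilon > 0$, so that $\frac{1}{\pi} P_\varepsilon$ is an approximate identity concentrating at $y=x$. Since $\rho(t,\cdot)$ is continuous at $x$ by condition {\bf [C1]} (piecewise differentiability) and lies in $L^p(\mbb{R})$ by {\bf [C2]}, the standard approximate-identity argument gives
$$
\lim_{\varepsilon \downarrow 0} \frac{1}{\pi}
\int_{\mbb{R}} \rho(t,y)\, P_\varepsilon(x-y)\, dy = \rho(t,x),
$$
so the imaginary contribution converges to $\mp\,\rho(t,x)$, matching the second term in \eqref{eq:Hilbert_rho}. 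The mass normalization together with the decay of $P_\varepsilon$ away from $y=x$ handles the tails, and the continuity of $\rho$ at $x$ handles the core.

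For the real part I would show that
$$
\lim_{\varepsilon \downarrow 0}
\int_{\mbb{R}} \rho(t,y)\, K_\varepsilon(x-y)\, dy
= \dashint_{\mbb{R} \setminus \{x\}} \frac{\rho(t,y)}{x-y}\, dy,
$$
so that, after multiplying by $1/\pi$, this piece becomes exactly $\mcl{H}[\rho(t,\cdot)](x)$ as defined in \eqref{eq:Hilbert1}. The cleanest route is to subtract the principal value and split the difference into a region $|x-y| \geq \delta$, where $K_\varepsilon(x-y) \to 1/(x-y)$ uniformly and the convergence is immediate by dominated convergence, and a region $|x-y| < \delta$, where the kernel is singular. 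On the singular region I would exploit the oddness of $K_\varepsilon(x-y)$ about $y=x$ to write the integral in terms of the symmetric difference $\rho(t,y)-\rho(t,2x-y)$, which the piecewise differentiability in {\bf [C1]} controls by $O(|x-y|)$, cancelling the singularity and yielding a bound that vanishes as $\delta \downarrow 0$ uniformly in $\varepsilon$.

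The main obstacle is precisely this real-part limit: establishing that the regularized kernel recovers the Cauchy principal value rather than some spurious boundary term. The interchange of the limit $\varepsilon \downarrow 0$ with the integral fails pointwise at $y=x$, so the whole argument rests on the symmetric cancellation near the singularity, for which the regularity afforded by {\bf [C1]} and the $L^p$ integrability from {\bf [C2]} are both essential; the imaginary part, by contrast, is a routine approximate-identity computation. Combining the two limits yields \eqref{eq:Hilbert_rho}, and its imaginary part is the asserted formula \eqref{eq:rho_G}.
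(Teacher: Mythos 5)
Your proof is correct, but it follows a genuinely different route from the paper. The paper proves this proposition as a special case of the Sokhotski--Plemelj theorem: it forms the Cauchy-type integral $\phi(z)=\frac{1}{2\pi\sqrt{-1}}\int_C \frac{f(w)}{w-z}\,dw$ over a closed contour (the real axis plus a large semicircle, $R\to\infty$), computes the inner and outer boundary values by indenting the contour with a small semicircle around $\zeta$, and crucially must \emph{assume} that $\rho(t,\cdot)$ extends to a function on the upper half-plane $\mbb{C}_+$ that is analytic on the contour and vanishes at infinity, so that the arc contribution dies. You instead stay entirely on the real line: the algebraic identity $\frac{1}{(x-y)\pm\sqrt{-1}\varepsilon}=\frac{(x-y)\mp\sqrt{-1}\varepsilon}{(x-y)^2+\varepsilon^2}$ splits $G(t,x\pm\sqrt{-1}\varepsilon)$ into a conjugate-Poisson part and a Poisson part, the latter handled by a standard approximate-identity argument and the former by the symmetric-cancellation argument near the singularity. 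What your approach buys is economy of hypotheses: you need only local regularity of $\rho(t,\cdot)$ at the point $x$ (continuity for the Poisson piece, a Lipschitz/Dini-type control of the symmetric difference for the principal-value piece), and you avoid the paper's rather strong and unverified assumption of an analytic extension of the density into $\mbb{C}_+$. What the paper's complex-analytic route buys is reusability: the intermediate objects it produces --- the boundary values $\phi_{\mnm{in}},\phi_{\mnm{out}}$ and especially the alternative representation \eqref{eq:HilbertAltIntegral} of the Hilbert transform --- are the workhorses in the subsequent proofs of Propositions \ref{prp:InverseHilbert} and \ref{prp:HilbertDerivative}, so the contour machinery does double duty. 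One small caveat in your write-up: piecewise differentiability ({\bf [C1]}) does not by itself guarantee continuity of $\rho(t,\cdot)$ at every $x$ (there could be jumps at the boundaries of the pieces, e.g.\ at edges of the support), and at such points the Poisson integral converges to the average of the one-sided limits rather than to $\rho(t,x)$; this affects only exceptional points and is a weaker implicit assumption than the analytic extension the paper's own proof relies on.
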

%%%%%%%%%%%%%%%%%%%%%%%%%%%%%%%%%%%%%%%%%%%%%%%
%%%%%%%%%%%%%%%%%%%%%%%%%%%%%%%%%%%%%%%%%%%%%%%
\begin{proof}
Consider a closed, simple, and positively-oriented contour $C$ on $\mbb{C}$ 
and a complex function $f$ which is analytic on $C$. 
Denote by $D$ the open region enclosed by $C$. We define the function
$$
\phi(z):=\frac{1}{2\pi \sqrt{-1}}\int_{C} 
\frac{f(w)}{w-z} d w.
$$
Clearly, $\phi(z)$ is well-defined when $z \notin C$, 
but it is discontinuous at $z \in C$. 
Choose $\zeta \in C$ and consider the limits 
when $z$ tends to $\zeta$ from the inside and from the outside of $D$. 
Assume, furthermore, that $C$ is smooth at $\zeta$. 
In the first case, there exists an angle $\alpha$ such that
\begin{eqnarray}
&& \phi_{\mnm{in}}(\zeta):=
\lim_{\substack{z\to\zeta, \\ z \in D}}\phi(z)
\nonumber\\
&& \quad = \frac{1}{2\pi \sqrt{-1}} \dashint_{C \setminus \{\zeta\}}
\frac{f(w)}{w-\zeta} d w 
+\lim_{\varepsilon \downarrow 0} \frac{1}{2\pi \sqrt{-1}} 
\int_0^\pi \frac{f(\zeta+\varepsilon \mnm{e}^{\sqrt{-1} (\alpha+\theta)})}
{\varepsilon\mnm{e}^{\sqrt{-1}(\alpha+\theta)}}
\sqrt{-1} \varepsilon\mnm{e}^{\sqrt{-1} (\alpha+\theta)} d \theta \nonumber\\
&& \quad =\frac{1}{2\pi \sqrt{-1}} \dashint_{C \setminus \{\zeta\}} 
\frac{f(w)}{w-\zeta} d w +\frac{1}{2}f(\zeta).
\nonumber
\end{eqnarray}
The principal-value integral is taken along $C$ while excluding the point $\zeta$. 
The case where $z$ approaches $\zeta$ from the outside of $D$
can be calculated similarly:
$$
\phi_{\mnm{out}}(\zeta):=\lim_{\substack{z\to\zeta, \\ z \in \mbb{C} \setminus \overline{D}}}\phi(z)
=\frac{1}{2\pi \sqrt{-1}} \dashint_{C \setminus \{\zeta\}}
\frac{f(w)}{w-\zeta} d w -\frac{1}{2}f(\zeta).
$$
We assume that, for each fixed $t \in [0, \infty)$, the domain
of the probability density function $\rho(t, \cdot)$ can be
extended from $\mbb{R}$ into the complex plane. 
Then we specialize the above result for the case 
where $f(\cdot)=\rho(t, \cdot)$ with a fixed $t \in [0, \infty)$. 
We choose $C$ to be the contour given 
by the parameterizations $w=y$ with $y$ going from $-R$ to $R$, 
and $w=R\mnm{e}^{\sqrt{-1} \theta}$ with $\theta$ going from $0$ to $\pi$, 
while letting $R$ tend to infinity (see Figure~\ref{fig:ContourC}).
% (see Figure~\ref{fig:ContourGamma}). 
\begin{figure}
\begin{center}
\begin{tikzpicture}
\draw [->] (-3,0) -- (3,0);
\draw [->] (0,-0.1) -- (0,3);
\draw [thick] (2.5,0) arc [radius=2.5, start angle=0, end angle= 180];
\draw [thick] (-2.5,0) -- (2.5,0);
\draw (0,0) -- (1.2,2.2);
\node [above left] at (0.6,1.1) {$R$};
\node at (-1.2,0) {$>$};
\node at (1.2,0) {$>$};
\node [rotate=-45] at (1.77,1.77) {$<$};
\node [rotate=45] at (-1.77,1.77) {$<$};
\node [right] at (0,3) {Im$(w)$};
\node [below] at (3,0) {Re$(w)$};
\end{tikzpicture}
\end{center}
\caption{\label{fig:ContourC}Contour $C$ before the limit where $R\to\infty$.}
\end{figure}
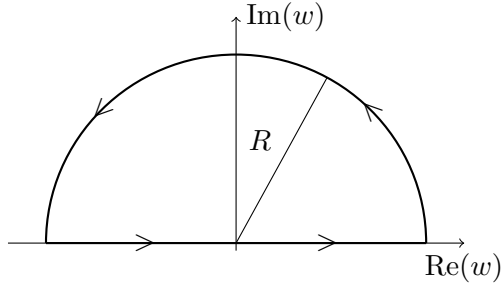
Taking $\zeta = x \in \mbb{R}$, we have
\begin{eqnarray}
\phi_{\mnm{in}}(x) &=& \frac{1}{2\pi \sqrt{-1}} \dashint_{\mbb{R} \setminus \{x\} } 
\frac{\rho(t, y)}{y-x} d y +\frac{1}{2}\rho(t, x), 
\label{eq:SokhotskiPlemeljIn} \\
\phi_{\mnm{out}}(x) &=& \frac{1}{2\pi \sqrt{-1}} \dashint_{\mbb{R} \setminus \{x\}} 
\frac{\rho(t, y)}{y-x} d y -\frac{1}{2} \rho(t, x).
\label{eq:SokhotskiPlemeljOut}
\end{eqnarray}
The reason for this is the following.
By the condition {\bf [C2]}, 
$\rho(t, x) \to 0$ as $x \to \pm \infty$, and
we will be able to extend $\rho(t, \cdot)$ to a function on
the complex upper half-plane $\mbb{C}_+ := \{z \in \mbb{C}: \Im z > 0\}$
so that $\lim_{z \to \infty, z \in \mbb{C}_+} \rho(t, z)=0$.
Hence, the part of the integral with a semi-circular contour vanishes:
$$
\lim_{R \to \infty} \int_0^{\pi} \frac{\rho(t, R \mnm{e}^{ \sqrt{-1} \theta})}{R\mnm{e}^{\sqrt{-1} \theta}}
\sqrt{-1} R\mnm{e}^{\sqrt{-1} \theta} d \theta
= \lim_{R \to \infty} \sqrt{-1} \int_0^{\pi} \rho(t, R \mnm{e}^{ \sqrt{-1} \theta} ) d \theta=0.
$$
This also implies that, for $t \in [0, \infty)$, 
\begin{equation}
2\pi \sqrt{-1} \phi(z) = \int_{\mbb{R}}\frac{\rho(t, y)}{y-z} d y = -G(t, z),
\quad z \in \mbb{C} \setminus \mbb{R}. 
\label{eq:CauchyInt1}
\end{equation}
The contour we have chosen covers the complex upper half-plane, 
so if $z$ approaches the real axis from above (resp., below), 
we must use $\phi_{\mnm{in}}(x)$ (resp., $\phi_{\mnm{out}}(x)$). Then, we obtain
$$
\lim_{\varepsilon \downarrow 0} G(t, x+\sqrt{-1} \varepsilon) = -2\pi \sqrt{-1} \phi_{\mnm{in}}(x), \quad 
\lim_{\varepsilon \downarrow 0} G(t, x-\sqrt{-1} \varepsilon)= -2\pi \sqrt{-1} \phi_{\mnm{out}}(x),
\quad x \in \mbb{R}.
$$
The result \eqref{eq:Hilbert_rho} follows by the definition \eqref{eq:Hilbert1} of the Hilbert transform $\mcl{H}$
applied to \eqref{eq:SokhotskiPlemeljIn} and \eqref{eq:SokhotskiPlemeljOut}.
\end{proof}
%%%%%%%%%%%%%%%%%%%%%%%%%%%%%%%%%%%%%%%%%%%%%%%%

Now we give the basic properties of the Hilbert transform,
which will be used to prove Theorem \ref{thm:main} in the next section.

\begin{prp}\label{prp:InverseHilbert}
For $f \in L^p(\mbb{R}), 1 < p < \infty$, then
the inverse Hilbert transform of $f$ is given by $-\mcl{H}[f]$, that is,
\begin{equation}
\mcl{H} \Big[ \mcl{H}[f] \Big](x)= - f(x).
\label{eqn:inverseH}
\end{equation}
\end{prp}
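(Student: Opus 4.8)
The plan is to reduce the identity $\mcl{H}[\mcl{H}[f]] = -f$ to a single eigenvalue relation for the Cauchy transform, reusing the Sokhotski--Plemelj computation already carried out in the proof of Proposition~\ref{prp:HilbertAvgResolvent}. For $f \in L^p(\mbb{R})$ I would first form the Cauchy transform
\[
\Phi(z) := \frac{1}{2\pi\sqrt{-1}} \int_{\mbb{R}} \frac{f(y)}{y-z} \intd y, \quad z \in \mbb{C}\setminus\mbb{R},
\]
which is exactly the function $\phi$ of Proposition~\ref{prp:HilbertAvgResolvent} with the density $\rho(t,\cdot)$ replaced by $f$. The same contour argument then gives the boundary values
\[
\Phi_{\pm}(x) := \lim_{\varepsilon\downarrow 0}\Phi(x\pm\sqrt{-1}\,\varepsilon)
= \pm\tfrac12 f(x) + \tfrac{\sqrt{-1}}{2}\,\mcl{H}[f](x), \quad x\in\mbb{R},
\]
the two signs corresponding to $\phi_{\mnm{in}}$ and $\phi_{\mnm{out}}$ in \eqref{eq:SokhotskiPlemeljIn}--\eqref{eq:SokhotskiPlemeljOut}.

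The key step is to show that the boundary value $\Phi_+$ is annihilated by the Cauchy transform evaluated in the lower half-plane. Since $\Phi$ is holomorphic on $\mbb{C}_+$ and decays at infinity there, for $z\in\mbb{C}\setminus\overline{\mbb{C}_+}$ the integrand $\Phi_+(y)/(y-z)$ extends holomorphically into $\mbb{C}_+$ (its only pole sits at $y=z$, outside $\mbb{C}_+$), so closing the contour with a large semicircle in $\mbb{C}_+$ and invoking Cauchy's theorem yields
\[
\frac{1}{2\pi\sqrt{-1}}\int_{\mbb{R}}\frac{\Phi_+(y)}{y-z}\intd y = 0, \quad \Im z < 0.
\]
Letting $z\to x$ from below and applying the $\phi_{\mnm{out}}$ relation now to $\Phi_+$ in place of $f$, I obtain
\[
-\tfrac12\Phi_+(x) + \tfrac{\sqrt{-1}}{2}\,\mcl{H}[\Phi_+](x) = 0,
\qquad\text{that is,}\qquad
\mcl{H}[\Phi_+] = -\sqrt{-1}\,\Phi_+ .
\]

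Substituting $\Phi_+ = \tfrac12\big(f + \sqrt{-1}\,\mcl{H}[f]\big)$ into this relation and using linearity of $\mcl{H}$ gives
\[
\tfrac12\big(\mcl{H}[f] + \sqrt{-1}\,\mcl{H}[\mcl{H}[f]]\big)
= \tfrac12\big(\mcl{H}[f] - \sqrt{-1}\, f\big),
\]
and comparing the two sides yields $\mcl{H}[\mcl{H}[f]] = -f$, which is \eqref{eqn:inverseH}. For complex-valued $f$ one splits into real and imaginary parts, the whole argument being linear.

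The main obstacle I anticipate is analytic rather than algebraic: justifying, for a general $L^p$ density, that $\Phi$ genuinely extends to $\mbb{C}_+$ with enough decay for the semicircular arc to drop out, and that $\Phi_+$ again lies in an $L^q(\mbb{R})$ space so that the Sokhotski--Plemelj formula may be re-applied to it. Both points constitute the Hardy-space content behind the $L^p$-boundedness of $\mcl{H}$ (the M.\ Riesz theorem); in the self-contained spirit of Proposition~\ref{prp:HilbertAvgResolvent} I would either invoke that boundedness directly or first establish the identity on a dense class of rapidly decaying $f$, for which the contour estimates are elementary, and then pass to the limit. As an independent check avoiding Hardy-space input, the identity also follows from the Poincar\'e--Bertrand formula for iterated principal-value integrals: with the partial-fraction decomposition $1/[(x-y)(y-z)] = (x-z)^{-1}\{(x-y)^{-1}+(y-z)^{-1}\}$ the inner kernel integral over $y$ vanishes, so only the local term proportional to $f(x)$ survives, reproducing $\mcl{H}[\mcl{H}[f]]=-f$.
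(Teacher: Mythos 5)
Your proof is correct (at the same level of analytic rigor as the paper's own arguments), but it is organized genuinely differently from the paper's proof. The paper also starts from the Sokhotski--Plemelj computation: it converts \eqref{eq:Hilbert1} into the representation \eqref{eq:HilbertAltIntegral}, $\mcl{H}[f](x)=\sqrt{-1}\,f(x)-\lim_{\varepsilon\downarrow0}\frac{1}{\pi}\int_{\mbb{R}}f(y)\,[y-(x+\sqrt{-1}\varepsilon)]^{-1}dy$, then applies this representation twice, expands $\mcl{H}\big[\mcl{H}[f]\big]$ into four terms, and evaluates the resulting double integral by closing the $y$-contour in $\mbb{C}_+$ and using Cauchy's integral formula at the pole $y=x+\sqrt{-1}\varepsilon_1$; the residue term cancels the cross term and leaves $-f$. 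You instead package the same complex-analytic input as an eigenvalue relation: the boundary value $\Phi_+=\tfrac12\big(f+\sqrt{-1}\,\mcl{H}[f]\big)$ of the Cauchy transform satisfies $\mcl{H}[\Phi_+]=-\sqrt{-1}\,\Phi_+$, because the Cauchy integral of an upper-half-plane holomorphic function evaluated from the lower half-plane vanishes, and \eqref{eqn:inverseH} then drops out by linearity. This is the Riesz-projection (Hardy-space) point of view --- $\tfrac12(I+\sqrt{-1}\,\mcl{H})$ projects onto boundary values of functions holomorphic in $\mbb{C}_+$, on which $\mcl{H}$ acts as multiplication by $-\sqrt{-1}$ --- and it makes the structural reason for $\mcl{H}^2=-I$ transparent, whereas the paper's route is a more self-contained residue computation that only ever manipulates integrals of $f$ and $\mcl{H}[f]$. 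Note that the two arguments carry the same implicit analytic debts: the paper's application of \eqref{eq:HilbertAltIntegral} to $\mcl{H}[f]$ tacitly assumes $\mcl{H}[f]\in L^p(\mbb{R})$ (the M.\ Riesz theorem), which is exactly the point you flag when re-applying Sokhotski--Plemelj to $\Phi_+$, and both proofs discard a semicircular arc at infinity without estimate. Your proposed remedies (proving the identity on a dense class of rapidly decaying functions and passing to the limit, or invoking the Poincar\'e--Bertrand formula for iterated principal values) would close these gaps; the paper leaves them implicit.
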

\begin{proof}
In the proof of Proposition \ref{prp:HilbertAvgResolvent}, we gave
an expression \eqref{eq:SokhotskiPlemeljIn} for
$\lim_{\varepsilon \downarrow 0} \phi(x+\sqrt{-1} \varepsilon),$ $x \in \mbb{R}$
for $\phi(z)$ given by \eqref{eq:CauchyInt1} for $z \in \mbb{C} \setminus \mbb{R}$.
This result implies the following expression for the
Hilbert transform which is different from \eqref{eq:Hilbert1}, 
\begin{equation}\label{eq:HilbertAltIntegral}
\mcl{H}[f](x)=\sqrt{-1} f(x)-
\lim_{\varepsilon \downarrow 0} \frac{1}{\pi} \int_{\mbb{R}} \frac{f(y)}{y-(x+\sqrt{-1} \varepsilon)} d y.
\end{equation}
The Hilbert transform is doubly performed using this expression as
\begin{eqnarray}
\label{eq:doubleH}
&& \mcl{H} \Big[ \mcl{H}[f] \Big](x) = -f(x)- \lim_{\varepsilon \downarrow 0} \frac{2 \sqrt{-1}}{\pi}
\int_{\mbb{R}} dy \, \frac{f(y)}{y-(x+\sqrt{-1} \varepsilon)} 
\\
&& \qquad - \lim_{\varepsilon_1 \downarrow 0} \lim_{\varepsilon_2 \downarrow 0}
\frac{1}{\pi^2}
\int_{\mbb{R}} d y \int_{\mbb R} d z 
\frac{f(z)}{[y-(x+\sqrt{-1} \varepsilon_1)][y-(z-\sqrt{-1} \varepsilon_2)]}.
\nonumber
\end{eqnarray}
In the double integral in the third term, 
we evaluate the integral over $y \in \mbb{R}$ as the integral over 
the contour $C$ depicted in Figure~\ref{fig:ContourC}, 
which encloses $\mbb{C}_+$ including a simple pole at $y=x+\sqrt{-1} \varepsilon_1$.
By Cauchy's integral formula, the third term is calculated as
\begin{eqnarray}
&& -\lim_{\varepsilon_1 \downarrow 0} \lim_{\varepsilon_2 \downarrow 0} \frac{1}{\pi^2} 
\int_{\mbb{R}} dz \frac{2 \pi \sqrt{-1} f(z)}{(x+\sqrt{-1} \varepsilon_1)-(z-\sqrt{-1} \varepsilon_2)}
\nonumber\\
&=&
\lim_{\varepsilon_1 \downarrow 0} \lim_{\varepsilon_2 \downarrow 0} \frac{2 \sqrt{-1}}{\pi} 
\int_{\mbb{R}} dz \frac{f(z)}{z-\{x+\sqrt{-1}(\varepsilon_1+\varepsilon_2)\}}.
\nonumber
\end{eqnarray}
This cancels the second term in \eqref{eq:doubleH} and \eqref{eqn:inverseH} is obtained. 
\end{proof}
%%%%%%%%%%%%%%%%%%%%%%%%%%%%%%%%%%%%%%%%%%%%%%%%%%%%%%%%%%%%%%%%%%%%

\begin{prp}\label{prp:HilbertDerivative}
Assume that $f$ has a piecewise-defined derivative
$\displaystyle{ \frac{df}{dx} }$, and 
$f$, $\displaystyle{\frac{df}{dx} \in L^p(\mbb{R})},$ $1 < p < \infty$. 
Then
$$
\frac{d \mcl{H}[f](x)}{d x}=\mcl{H} \left[ \frac{df}{dx} \right](x),
\quad x \in \mbb{R}.
$$
\end{prp}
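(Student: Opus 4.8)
The plan is to exploit the convolution structure of the Hilbert transform, which makes differentiation in $x$ commute with the singular integral, so that the derivative ultimately lands on $f$. Starting from the definition \eqref{eq:Hilbert1}, I would first perform the change of variables $u = x - y$ to move the singularity to a fixed, symmetric location:
$$
\mcl{H}[f](x) = \frac{1}{\pi} \dashint_{\mbb{R}\setminus\{0\}} \frac{f(x-u)}{u}\, du.
$$
The symmetric excision $[x-\varepsilon, x+\varepsilon]$ maps to $[-\varepsilon, \varepsilon]$, so the principal-value structure is preserved. Because the singular point $u=0$ no longer depends on $x$, the entire $x$-dependence now sits in the numerator $f(x-u)$, which is precisely what allows the $x$-derivative to be carried inside.

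The key step is then to justify differentiation under the principal-value integral. I would form the difference quotient of $\mcl{H}[f]$ at $x$, express it after the change of variables as $\frac{1}{\pi}\dashint_{\mbb{R}\setminus\{0\}} \frac{f(x+h-u)-f(x-u)}{h\,u}\, du$, and let $h\to 0$. By the piecewise differentiability of $f$, the bracketed quotient converges pointwise to $f'(x-u)$; the $L^p$ bounds on $f$ and $df/dx$ furnished by condition \textbf{[C2]} provide the domination needed to pass the limit inside, both away from and across the symmetric singularity at $u=0$. Reversing the change of variables then yields
$$
\frac{d}{dx}\mcl{H}[f](x) = \frac{1}{\pi}\dashint_{\mbb{R}\setminus\{0\}} \frac{f'(x-u)}{u}\, du = \frac{1}{\pi}\dashint_{\mbb{R}\setminus\{x\}} \frac{f'(y)}{x-y}\, dy = \mcl{H}\!\left[\frac{df}{dx}\right](x),
$$
which is the assertion.

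An alternative route, more in the spirit of the preceding proofs, uses the regularized representation \eqref{eq:HilbertAltIntegral}. For each fixed $\varepsilon > 0$ the kernel $[y-(x+\sqrt{-1}\varepsilon)]^{-1}$ is smooth in $x$, so differentiation can be taken under the (now ordinary) integral, producing a factor $[y-(x+\sqrt{-1}\varepsilon)]^{-2}$; an integration by parts in $y$ transfers this back onto $f$, yielding the kernel acting on $f'$, with the boundary terms vanishing because \textbf{[C2]} forces $f(y)\to 0$ as $|y|\to\infty$. Taking $\varepsilon\downarrow 0$ and comparing with \eqref{eq:HilbertAltIntegral} applied to $df/dx$ again gives the claim.

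The main obstacle in either route is the interchange of the $x$-derivative with the singular limit defining the Hilbert transform: the principal value in the first route, the $\varepsilon\downarrow 0$ limit in the second. Since the kernel $1/u$ is not locally integrable at the singularity, the justification cannot rest on naive dominated convergence alone; it relies on the symmetry of the principal value, which cancels the non-integrable contribution, together with the piecewise $C^1$ regularity and the $L^p$ control of both $f$ and $df/dx$. Establishing that the domination (or the $\varepsilon$-uniformity in the second route) holds near $u=0$ is the delicate point, and it is exactly here that the standing hypotheses \textbf{[C1]} and \textbf{[C2]} are used.
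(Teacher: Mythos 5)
Your proposal is correct at the same (formal) level of rigor as the paper, but only your second route matches the paper's method; the first is genuinely different. The ``alternative route'' is essentially the paper's own argument run in reverse: the paper starts from $\mcl{H}\left[\frac{df}{dx}\right]$, expresses it through the regularized representation \eqref{eq:HilbertAltIntegral}, integrates by parts in $y$ (using that $f\in L^p$ vanishes at infinity), then trades $\frac{d}{dy}$ for $-\frac{d}{dx}$ so the derivative can be pulled outside the integral and the $\varepsilon$-limit, recognizing the result as $\frac{d}{dx}\mcl{H}[f](x)$; your version differentiates first and integrates by parts afterwards, which is the same computation read in the opposite direction. Your first route --- rewriting $\mcl{H}[f]$ as the convolution $\frac{1}{\pi}\dashint_{\mbb{R}\setminus\{0\}} f(x-u)u^{-1}\,du$ and passing to the limit in the difference quotient --- is more elementary and avoids the complex regularization entirely, but its key step is asserted rather than proved: near $u=0$ the kernel $1/u$ is not locally integrable, so no pointwise ``domination'' supplied by $L^p$ bounds can justify moving the $h\to 0$ limit inside the principal value, and you acknowledge but do not close this gap. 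The clean way to finish that route is to observe that the difference quotients $[f(\cdot+h)-f(\cdot)]/h$ converge to $df/dx$ in $L^p$ norm (continuity of translation in $L^p$), that $\mcl{H}$ commutes with translations, and that $\mcl{H}$ is bounded on $L^p$ (M. Riesz theorem), so $\mcl{H}$ applied to the difference quotient converges in $L^p$ to $\mcl{H}\left[\frac{df}{dx}\right]$; alternatively, one can use the multiplier identity \eqref{eq:F_H} together with $\mcl{F}\left[\frac{df}{dx}\right](\nu)=2\pi\sqrt{-1}\,\nu\,\mcl{F}[f](\nu)$ and the fact that the two multipliers commute. In fairness, the paper's proof likewise interchanges $\frac{d}{dx}$ with $\lim_{\varepsilon\downarrow 0}$ and with the integral without justification, so the gap you flag is no worse than what the paper itself accepts.
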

\begin{proof}
We use the expression \eqref{eq:HilbertAltIntegral} 
for the Hilbert transform, 
\begin{eqnarray}
\mcl{H}\left[ \frac{df}{dx} \right](x) &=& \sqrt{-1} \frac{d f (x)}{d x}
-\lim_{\varepsilon \downarrow 0} \frac{1}{\pi} 
\int_{\mbb{R}} \frac{d f(y)}{d y} \frac{1}{y-(x+\sqrt{-1} \varepsilon)} d y
\nonumber\\
&=&\sqrt{-1} \frac{df(x)}{dx}
+\lim_{\varepsilon \downarrow 0} \frac{1}{\pi} \int_{\mbb{R}} f(y) 
\frac{d}{dy} \left[\frac{1}{y-(x+\sqrt{-1} \varepsilon)} \right] d y. 
\nonumber
\end{eqnarray}
The second equality is obtained from an integration by parts, 
where we use the assumption that $f\in L^p(\mbb{R})$, so $f$ vanishes at infinity. 
We change the variable of differentiation from $y$ to $x$ inside the integral.
Then the above is equal to
$$
\sqrt{-1} \frac{df(x)}{dx} 
-\frac{d}{dx} \lim_{\varepsilon \downarrow 0} 
\frac{1}{\pi} \int_{\mbb{R}} \frac{f(y)}{y-(x+\sqrt{-1} \varepsilon) } d y
= \frac{d \mcl{H}[f](x)}{dx},
$$
where \eqref{eq:HilbertAltIntegral} is again used.
Then the proof is completed.
\end{proof}

For the following proposition, it will be useful to know how the Hilbert transform 
behaves when a Fourier transform is present. 
Here we rewrite the Hilbert transform as
$$
\mcl{H}[f](x)=[f\star g](x),
$$
where $\star$ denotes the convolution product
$$
[f\star g](x):=\int_{\mbb{R}}f(y)g(x-y)\intd y,
$$
and $g(x)= 1/(\pi x)$. 
The integral is interpreted as a principal value where necessary. 
It is clear that the Fourier transform defined by
$$
\mcl{F}[f](\nu):=\int_{\mbb{R}}f(x)\mnm{e}^{-2\pi \sqrt{-1} x\nu} d x
$$
transforms the Hilbert transform of $f$ into
$\mcl{F} \Big[ \mcl{H}[f] \Big](\nu)=\Gamma(\nu)\Phi(\nu)$, 
where $\Phi(\nu)$ and $\Gamma(\nu)$ are the Fourier transforms of $f(x)$ and $g(x)=1/(\pi x)$, respectively. 
It is easy to verify that $\Gamma(\nu)$ is given by
\begin{equation}\label{eq:GammaSgn}
\Gamma(\nu)=-\sqrt{-1} \ \sgn(\nu)
:= \left\{ \begin{array}{ll}
\sqrt{-1}, \quad & \mbox{if $\nu < 0$}, \cr
0, \quad & \mbox{if $\nu=0$}, \cr
-\sqrt{-1}, \quad & \mbox{if $\nu > 0$}.
\end{array} \right.
\end{equation}
In summary, with \eqref{eq:GammaSgn}, we obtain the formula
\begin{equation}
\mcl{F} \Big[\mcl{H}[f] \Big](\nu)=\Gamma(\nu) \mcl{F}[f](\nu).
\label{eq:F_H}
\end{equation}

We will proceed with the proof of the following proposition. 

%%%%%%%%%%%%%%%%%%%%%%%%%%%%%%%%%%%%%%%%%%%%%%%%%%%
\begin{prp}[Carton-Lebrun {\cite{cartonlebrun77}}]
\label{prp:HilbertProduct}
Assume that $f \in L^p(\mbb{R})$, $h \in L^q(\mbb{R})$ with
$1 < p < \infty,$ $1 < q < \infty$ and $1/p+1/q \leq 1$. 
Then
\begin{equation}\label{eq:HilbertProduct}
\mcl{H}[f](x)\mcl{H}[h](x) - f(x)h(x)=\mcl{H} \Big[ f\mcl{H}[h]+\mcl{H}[f]h \Big](x)
\quad \mbox{a.e.}
\end{equation}
\end{prp}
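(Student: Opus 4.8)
The plan is to take the Fourier transform of both sides of \eqref{eq:HilbertProduct} and exploit the multiplier formula \eqref{eq:F_H}, together with the fact that, for the normalization fixed above, the Fourier transform turns products into convolutions: $\mcl{F}[uv](\nu)=\int_{\mbb{R}}\mcl{F}[u](\mu)\,\mcl{F}[v](\nu-\mu)\intd\mu$. I write $\Phi=\mcl{F}[f]$ and $\Psi=\mcl{F}[h]$, so that by \eqref{eq:F_H} the transforms of $\mcl{H}[f]$ and $\mcl{H}[h]$ are $\Gamma(\nu)\Phi(\nu)$ and $\Gamma(\nu)\Psi(\nu)$, respectively. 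The whole identity will then collapse to a pointwise algebraic statement about the Fourier multiplier $\Gamma$.

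First I would Fourier-transform the left-hand side. Since $\mcl{F}[\mcl{H}[f]\,\mcl{H}[h]](\nu)=\int_{\mbb{R}}\Gamma(\mu)\Gamma(\nu-\mu)\Phi(\mu)\Psi(\nu-\mu)\intd\mu$ and $\mcl{F}[fh](\nu)=\int_{\mbb{R}}\Phi(\mu)\Psi(\nu-\mu)\intd\mu$, the transform of the left side is $\int_{\mbb{R}}\bigl[\Gamma(\mu)\Gamma(\nu-\mu)-1\bigr]\Phi(\mu)\Psi(\nu-\mu)\intd\mu$. Transforming the right-hand side with \eqref{eq:F_H} and the same convolution rule gives $\Gamma(\nu)\int_{\mbb{R}}\bigl[\Gamma(\mu)+\Gamma(\nu-\mu)\bigr]\Phi(\mu)\Psi(\nu-\mu)\intd\mu$. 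Hence it suffices to prove the pointwise multiplier identity $\Gamma(\mu)\Gamma(\nu-\mu)-1=\Gamma(\nu)\bigl[\Gamma(\mu)+\Gamma(\nu-\mu)\bigr]$ for almost every $(\mu,\nu)$.

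I would then verify this identity directly from \eqref{eq:GammaSgn}. Setting $a=\sgn\mu$, $b=\sgn(\nu-\mu)$, $c=\sgn\nu$ and using $\Gamma(\cdot)=-\sqrt{-1}\,\sgn(\cdot)$ together with $(\sqrt{-1})^2=-1$, the left side becomes $-ab-1$ and the right side becomes $-c(a+b)$, so the claim reduces to $ab+1=c(a+b)$. Because the lines $\mu=0$ and $\mu=\nu$ have measure zero, it is enough to take $a,b\in\{-1,+1\}$, and a short case analysis on the sign of $\nu$ (which fixes $c$) confirms $ab+1=c(a+b)$ in each case. Granting this, the two Fourier transforms coincide, and Fourier inversion yields \eqref{eq:HilbertProduct} almost everywhere.

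The main obstacle is analytic rather than algebraic: one must justify that every product appearing in the statement, namely $fh$, $f\mcl{H}[h]$, and $\mcl{H}[f]h$, together with their Hilbert transforms, lies in a space where \eqref{eq:F_H}, the convolution theorem, and Fubini's theorem are all applicable. The hypothesis $1/p+1/q\le1$ is precisely what guarantees, via Hölder's inequality and the $L^p$-boundedness of $\mcl{H}$ for $1<p<\infty$, that these products belong to suitable $L^r(\mbb{R})$; special care is needed at the borderline $1/p+1/q=1$, where the convolutions must be read as principal-value (tempered-distribution) pairings and the resulting equality holds only almost everywhere, as stated. Once this functional-analytic bookkeeping is in place, the result follows from the elementary sign computation above.
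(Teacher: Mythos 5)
Your strategy is the same as the paper's: Fourier-transform both sides, invoke the multiplier identity \eqref{eq:F_H} together with the convolution theorem, and reduce the statement to an algebraic identity for the symbol $\Gamma$ of \eqref{eq:GammaSgn}. Your pointwise identity $\Gamma(\mu)\Gamma(\nu-\mu)-1=\Gamma(\nu)\bigl[\Gamma(\mu)+\Gamma(\nu-\mu)\bigr]$ is precisely the content of the paper's Lemma~\ref{lmm:auxiliary}, and your case check of $ab+1=c(a+b)$ with $a=\sgn(\mu)$, $b=\sgn(\nu-\mu)$, $c=\sgn(\nu)$ is correct: when $a=b$ the sign of $\nu$ is forced and both sides equal $2$, and when $a\neq b$ both sides vanish. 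Verifying the identity at the level of integrands, as you do, is marginally more direct than the paper's route, which instead computes both sides of the lemma to the common value $-2\,\sgn(x)\int_0^x\Phi(y)\Theta(x-y)\,dy$; the two computations are equivalent.

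The genuine gap is in your closing paragraph, where you assert that the general case $1/p+1/q\le 1$ follows from ``functional-analytic bookkeeping.'' It does not, at least not along the lines you sketch. Under the hypothesis $1/p+1/q\le 1$, at least one of the exponents exceeds $2$ except in the single case $p=q=2$; and if, say, $p>2$, then $\mcl{F}[f]$ is only a tempered distribution, not a function, so the convolution integrals $\int_{\mbb{R}}\Phi(\mu)\Psi(\nu-\mu)\,d\mu$ you write down are meaningless, and multiplying such a distribution by the discontinuous function $\Gamma$ is likewise undefined. There is a further subtlety even when all products land in $L^1$ (as they do for $p=q=2$): applying \eqref{eq:F_H} to $u=f\mcl{H}[h]+\mcl{H}[f]h\in L^1(\mbb{R})$ needs justification, since the Hilbert transform is not bounded on $L^1$. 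This is exactly why the paper carries out the Fourier argument only for $p=q=2$, where $\Phi,\Theta\in L^2(\mbb{R})$ make every step legitimate, and defers the general exponents to Carton-Lebrun's original paper. So your argument stands as a correct proof of the $p=q=2$ case, matching the paper; to claim the full range of exponents you must either reproduce Carton-Lebrun's density/limiting argument or, as the paper does, cite it.
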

%%%%%%%%%%%%%%%%%%%%%%%%%%%%%%%%%%%%%%%%%%%%%%%%%%%%
\vskip 0.3cm

Before the proof of this statement, we establish a helpful equation \cite{cartonlebrun77}.
%%%%%%%%%%%%%%%%%%%%%%%%%%%%%%%%%%%%%%%%%%%%%%
\begin{lmm}\label{lmm:auxiliary}
Assume that $\Phi, \Theta \in L^2(\mbb{R})$. Then
with $\Gamma(x)$ given by \eqref{eq:GammaSgn}, 
$$
[\Gamma\Phi\star\Gamma\Theta](x)-[\Phi\star\Theta](x)
=\Gamma(x) \Big\{ [\Phi\star\Gamma\Theta](x)+[\Gamma\Phi\star\Theta](x) \Big\}.
$$
\end{lmm}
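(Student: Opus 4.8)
The plan is to reduce the stated convolution identity to a single pointwise identity for the multiplier $\Gamma$ and then verify that identity by an elementary sign analysis. First I would write all four convolutions appearing in the statement as integrals over a common variable $y$: for instance $[\Gamma\Phi\star\Gamma\Theta](x)=\int_{\mbb{R}}\Gamma(y)\Phi(y)\,\Gamma(x-y)\Theta(x-y)\intd y$, and likewise for $[\Phi\star\Theta]$, $[\Phi\star\Gamma\Theta]$, and $[\Gamma\Phi\star\Theta]$. Since $\Phi,\Theta\in L^2(\mbb{R})$ and $|\Gamma|\le 1$, each integrand is dominated by $|\Phi(y)\Theta(x-y)|$, which is integrable in $y$ by the Cauchy--Schwarz inequality; hence all four integrals converge absolutely and may be combined under a single integral sign. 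Pulling the $y$-independent factor $\Gamma(x)$ inside the integral on the right-hand side, the whole identity follows once I establish that, for fixed $x$ and almost every $y$,
\[
\Gamma(y)\Gamma(x-y)-1=\Gamma(x)\big\{\Gamma(y)+\Gamma(x-y)\big\}.
\]

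Next I would prove this pointwise identity. Using $\Gamma(s)=-\sqrt{-1}\,\sgn(s)$, so that $\Gamma(s)^2=-1$ whenever $s\neq 0$, the claim is equivalent to
\[
\sgn(y)\sgn(x-y)+1=\sgn(x)\big\{\sgn(y)+\sgn(x-y)\big\}.
\]
Writing $a=\sgn(y)$ and $b=\sgn(x-y)$ and recalling $x=y+(x-y)$, I would check the four cases $a,b\in\{+1,-1\}$: when $a=b$ one has $\sgn(x)=a=b$ and both sides equal $2$, while when $a=-b$ both sides equal $0$. The set of $y$ on which $y=0$, $x-y=0$, or $x=0$ is a null set, so the identity holds for almost every $y$, which is all that the subsequent integration requires.

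Finally, multiplying the pointwise identity by $\Phi(y)\Theta(x-y)$ and integrating in $y$ turns the left-hand side into $[\Gamma\Phi\star\Gamma\Theta](x)-[\Phi\star\Theta](x)$ and the right-hand side into $\Gamma(x)\{[\Phi\star\Gamma\Theta](x)+[\Gamma\Phi\star\Theta](x)\}$, which is the assertion. I do not expect a genuine obstacle: the only points needing care are the absolute-convergence justification that legitimizes recombining the four integrals and the bookkeeping of the measure-zero sign-exception sets. A conceptually cleaner way to organize the case analysis, which I may record, is to set $u=y$, $v=x-y$, $w=-x$ so that $u+v+w=0$; since three nonzero reals summing to zero cannot share a common sign, exactly two of $\sgn u,\sgn v,\sgn w$ agree, whence the symmetric sum $\sgn u\,\sgn v+\sgn v\,\sgn w+\sgn w\,\sgn u$ equals $-1$, which is precisely the reformulated identity after using $\Gamma(-x)=-\Gamma(x)$ and $\Gamma(s)^2=-1$.
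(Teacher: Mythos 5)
Your proof is correct and takes essentially the same route as the paper's: both arguments come down to the elementary sign identity $\sgn(y)\sgn(x-y)+1=\sgn(x)\{\sgn(y)+\sgn(x-y)\}$ under the convolution integral, which the paper expresses by evaluating both sides to the common form $-2\,\sgn(x)\int_0^x\Phi(y)\Theta(x-y)\,dy$, while you verify it pointwise (off a null set) and then integrate. Your Cauchy--Schwarz justification of absolute convergence and the cyclic-sum reformulation are harmless refinements of the same computation.
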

%%%%%%%%%%%%%%%%%%%%%%%%%%%%%%%%%%%%%%%%%%%%
\begin{proof}
We perform a straightforward calculation. The first part of the LHS gives
$$
[\Gamma\Phi\star\Gamma\Theta](x)=-\int_{\mbb{R}}\sgn[y(x-y)]\Phi(y)\Theta(x-y) d y,
$$
so the LHS gives
$$
[\Gamma\Phi\star\Gamma\Theta](x)-[\Phi\star\Theta](x)
=-2\sgn(x) \int_0^x \Phi(y)\Theta(x-y) d y.
$$
For the RHS, we obtain
\begin{eqnarray}
&& \Gamma(x) \Big\{[\Phi\star\Gamma\Theta](x)+[\Gamma\Phi\star\Theta](x) \Big\}
\nonumber\\
&& \quad = -\sgn(x) \int_{\mbb{R}} [\sgn(y)+\sgn(x-y)] \Phi(y)\Theta(x-y) d y
\nonumber\\
&& \quad = -2\sgn(x) \int_{0}^{x} \Phi(y)\Theta(x-y) d y,
\nonumber
\end{eqnarray}
which is identical to the LHS, as desired.
\end{proof}
%%%%%%%%%%%%%%%%%%%%%%%%%%%%%%%%%%%%%%%%%%

\begin{proof}[Proof of Proposition~\ref{prp:HilbertProduct}]
Here we consider the case $p=q=2$. 
We take the Fourier transform of 
\eqref{eq:HilbertProduct}. 
Set $\mcl{F}[f](\nu)=\Phi(\nu)$ and $\mcl{F}[h](\nu)=\Theta(\nu)$. 
By \eqref{eq:F_H} the LHS gives
$$
\mcl{F} \Big[ \mcl{H}[f]\mcl{H}[h] - fh \Big](\nu)=[\Gamma\Phi\star\Gamma\Theta](\nu)-[\Phi\star\Theta](\nu).
$$
By virtue of Lemma \ref{lmm:auxiliary}, this is equal to
$\Gamma(\nu) \{ [\Phi\star\Gamma\Theta](\nu)+[\Gamma\Phi\star\Theta](\nu) \}$.
Again using \eqref{eq:F_H}, this is rewritten as
$$
\Gamma(\nu)\mcl{F} \Big[ f\mcl{H}[h]+\mcl{H}[f] h \Big](\nu)
= \mcl{F} \Big[ \mcl{H} \Big[ f\mcl{H}[h]+\mcl{H}[f]h \Big] \Big](\nu),
$$
and thus we arrive at the equality,
$$
\mcl{F} \Big[ \mcl{H}[f]\mcl{H}[h] - fh \Big](\nu)
=\mcl{F} \Big[ \mcl{H} \Big[ f\mcl{H}[h]+\mcl{H}[f]h \Big] \Big](\nu).
$$
Taking the inverse Fourier transform of this equation yields the result. 
For the general case $1/p+1/q \leq 1$, see the proof 
given in \cite{cartonlebrun77}. 
\end{proof}
%%%%%%%%%%%%%%%%%%%%%%%%%%%%%%%%%%%
Setting $f=h$ gives the following.
%%%%%%%%%%%%%%%%%%%%%%%%%%%%%%%%%%%%%%%%%%%
\begin{crl}\label{crl:SimpleHilbertProduct}
For $f \in L^p(\mbb{R})$ with $p \geq 2$, 
\begin{equation}
\mcl{H} \Big[ f\mcl{H}[f] \Big](x)=\frac{1}{2} \Big\{ \mcl{H}[f](x)^2-f(x)^2 \Big\}.
\label{eq:simple}
\end{equation}
\end{crl}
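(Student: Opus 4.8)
The plan is to obtain this corollary as a direct specialization of the Carton-Lebrun identity of Proposition~\ref{prp:HilbertProduct}. The strategy is simply to set $h = f$ in \eqref{eq:HilbertProduct} and then simplify both sides, so the only genuine content is verifying that the single hypothesis $f \in L^p(\mbb{R})$ with $p \geq 2$ actually supplies the full set of assumptions required by the proposition. First I would take $q = p$ in the admissibility condition $1/p + 1/q \leq 1$; this becomes $2/p \leq 1$, which holds precisely when $p \geq 2$. Thus the assumption $p \geq 2$ is exactly what licenses $f$ to play the role of both $f$ and $h$ in \eqref{eq:HilbertProduct}, and in particular guarantees that the products $f\mcl{H}[f]$ and $\mcl{H}[f]f$ sit in a space on which the outer Hilbert transform is well-defined.

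With $h = f$ in place, I would next read off both sides of \eqref{eq:HilbertProduct}. The left-hand side collapses to $\mcl{H}[f](x)^2 - f(x)^2$. On the right-hand side the two summands inside the outer Hilbert transform coincide, since $f\mcl{H}[f] = \mcl{H}[f]f$, so its argument is $2 f\mcl{H}[f]$; by linearity of $\mcl{H}$ the right-hand side is therefore $2\,\mcl{H}[f\mcl{H}[f]](x)$. Equating the two expressions gives
$$
\mcl{H}[f](x)^2 - f(x)^2 = 2\,\mcl{H}\Big[ f\mcl{H}[f] \Big](x) \quad \mbox{a.e.},
$$
and dividing through by $2$ produces \eqref{eq:simple}.

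Because every step is an elementary substitution followed by a rearrangement, I do not expect any substantial obstacle here; the corollary is essentially bookkeeping. The one point that deserves care—and the closest thing to a ``hard part''—is precisely the exponent accounting described above: one must confirm that forcing the diagonal case $p = q$ collapses the two-parameter condition $1/p + 1/q \leq 1$ to the sharp single constraint $p \geq 2$, so that no integrability hypothesis is silently lost when passing from the proposition to the corollary.
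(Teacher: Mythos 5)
Your proposal is correct and is exactly the paper's approach: the paper derives the corollary by the single remark ``Setting $f=h$ gives the following,'' i.e.\ specializing Proposition~\ref{prp:HilbertProduct} to $h=f$, which is precisely your substitution-and-rearrangement argument. Your explicit check that $p=q\geq 2$ is what makes the hypothesis $1/p+1/q\leq 1$ hold is a useful detail the paper leaves implicit, but it does not change the route.
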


%%%%%%%%%%%%%%%%%%%%%%%%%%%%%%%%%%%%%%%%%%%%%%%%
\section{Proof of Theorem \ref{thm:main}}\label{sec:proof}
%%%%%%%%%%%%%%%%%%%%%%%%%%%%%%%%%%%%%%%%%%%%%%%%

In this section, we prove Theorem \ref{thm:main}.

\begin{proof}[Proof of {\rm (i)}]
For any $t \in [0, \infty)$, the Green's function $G(t, \cdot)$ is analytic
in $\mbb{C} \setminus \mbb{R}$.
Then the following equation is guaranteed by \eqref{eq:G2}, 
$$
\frac{\partial G(t, x+\sqrt{-1} y)}{\partial t} 
+G(t, x+\sqrt{-1} y) \frac{\partial G(t, x+\sqrt{-1} y)}{\partial x} =0,
\quad x \in \mbb{R}, \quad y \in \mbb{R} \setminus \{0\}.
$$
Put $y=\varepsilon >0$ and take the limit $\varepsilon \downarrow 0$.
By Proposition \ref{prp:HilbertAvgResolvent}, we will obtain
\begin{eqnarray}
&& 
\frac{\partial}{\partial t}
\Big\{ \mcl{H}[\rho(t, \cdot)](x) - \sqrt{-1} \rho(t, x) \Big\}
\nonumber\\
&& 
+ \pi \Big\{ \mcl{H}[\rho(t, \cdot)](x) - \sqrt{-1} \rho(t, x) \Big\}
\frac{\partial}{\partial x}
\Big\{ \mcl{H}[\rho(t, \cdot)](x) - \sqrt{-1} \rho(t, x) \Big\} =0.
\nonumber
\end{eqnarray}
This is written as
\begin{eqnarray}
\label{eq:eq1}
&& \left\{ \mcl{H} \left[ \frac{\partial \rho(t, \cdot)}{\partial t} \right](x)
+ \pi \frac{\partial}{\partial x} 
\frac{1}{2} \Big\{
\mcl{H}[\rho(t, \cdot)](x)^2-\rho(t,x)^2 \Big\} \right\}
\\
&& - \sqrt{-1}
\left\{ \frac{\partial \rho(t, x)}{\partial t}
+ \pi \frac{\partial}{\partial x}
\Big\{ \rho(t, x) \mcl{H}[\rho(t, \cdot)](x) \Big\} \right\}
=0.
\nonumber
\end{eqnarray}
Applying Corollary \ref{crl:SimpleHilbertProduct}
and Proposition \ref{prp:HilbertDerivative}, we have
\begin{eqnarray}
&& \frac{\partial}{\partial x} 
\frac{1}{2} \Big\{
\mcl{H}[\rho(t, \cdot)](x)^2-\rho(t,x)^2 \Big\} 
= \frac{\partial}{\partial x} \mcl{H} \Big[ \rho(t, \cdot) \mcl{H}[\rho(t, \cdot)] \Big](x)
\nonumber\\
&& \qquad \qquad 
= \mcl{H} \left[ 
\frac{\partial}{\partial x}
\Big\{\rho(t, \cdot) \mcl{H}[\rho(t, \cdot)] \Big\} \right](x)
\nonumber
\end{eqnarray}
Therefore, \eqref{eq:eq1} is equivalent with
$$
\mcl{H}[A(t, \cdot)](x) - \sqrt{-1} A(t, x)=0
\quad \Longleftrightarrow \quad
\mbox{$\mcl{H}[A(t, \cdot)](x)=0$ and $A(t, x)=0$}, 
$$
with
$$
A(t, x)=\frac{\partial \rho(t, x)}{\partial t}
+\frac{\partial}{\partial x}
\Big\{ \rho(t, x) \mcl{H}[\rho(t, \cdot)](x) \Big\} \in \mbb{R},
$$
for $t \in [0, \infty), x \in \mbb{R}$.
Since the Hilbert transform is invertible
by Proposition \ref{prp:InverseHilbert},
the equation \eqref{eq:rho2} is obtained.
\end{proof}

\begin{proof}[Proof of {\rm (ii)}] We apply the {\it method of characteristics}
\cite{AGZ10,blaizotnowak08,blaizotnowak10,forrestergrela15}
to solve the partial differential equation \eqref{eq:G2}.
We consider a parametrization $(t,z)=(t(r), z(r))$ such that $G(t(r), z(r))$ is constant for 
$r \in [0, \infty)$. 
That is, we construct a differentiable curve in $[0, \infty) \times \mbb{C} \setminus \mbb{R}$ 
along which $d G(t(r), z(r))/d r=0$. This construction leads to
%\begin{equation}\label{eq:CharacteristicParametrization}
$$
\frac{d t}{d r}\frac{\partial G(t, z)}{\partial t}
+ \frac{d z}{d r}\frac{\partial G(t, z)}{\partial z}
=0.
%\end{equation}
$$
Comparing this equation to \eqref{eq:G2} gives
\begin{equation}
\frac{d t}{d r}=1,\quad \frac{d z}{d r}=G(t, z).
\end{equation}
We can derive the explicit form of the parametrization from these equations. 
Clearly, $t$ and $r$ differ only by a constant, so we set $r=t-t_0 \geq 0$, 
and use $t$ as the parametrization variable. 
For $z$ we have
$$
\frac{d z}{d t}=G(t, z)=G(t_0, z_0).
$$
The second equality is derived from the requirement that $G(t, z)$ be constant along the curve $(t, z(t))$, 
where $z_0$ is the value of $z$ at $t=t_0$. Integrating this equation yields
\begin{equation}\label{eq:Characteristics}
z=tG(t_0, z_0)+z_0.
\end{equation}
Thus we obtain the equalities
$$
G(t, z)=G\Big(t_0, z-tG(t_0, z_0) \Big)=G\Big(t_0, z-t G(t, z) \Big).
$$
Without loss of generality, we can choose $t_0=0$ to obtain 
\eqref{eq:functional}.
\end{proof}

%%%%%%%%%%%%%%%%%%%%%%%%%%%%%%%%%%%%%%%%%%%%%%%%
\section{Solutions of hydrodynamic equations for special initial configurations}\label{sec:solutions}
%%%%%%%%%%%%%%%%%%%%%%%%%%%%%%%%%%%%%%%%%%%%%%%%
%%%%%%%%%%%%%%%%%%%%%%%%%%%%%%%%%%%%%%%%%%
\subsection{Case with one source}
%%%%%%%%%%%%%%%%%%%%%%%%%%%%%%%%%%%%%%%%%%

We consider the case where all the particles are located at a single point when $t=0$. 
Without loss of generality, we can choose the origin as the starting point, i.e.,
$$
\mu(\cdot)=\delta_0(\cdot)
\quad \Longleftrightarrow \quad
d \mu(x)=\rho(0, x) dx=\delta(x) dx,
$$
where $\delta(x)$ denotes Dirac's delta function.
Consequently,
\begin{equation}
G(0, z)=\int_{\mbb{R}} \frac{\rho(x,0) dx}{z-x}=\int_{\mbb{R}} \frac{\delta(x) dx}{z-x} =\frac{1}{z}.
\label{eq:G_0z_1}
\end{equation}
Then, \eqref{eq:functional} becomes
$$
t G^2(t, z)-zG(t, z)+1=0.
$$
This algebraic equation for $G(t, z)$ is solved by
\begin{equation}
G(t, z)=\frac{1}{2t} \left( z\pm\sqrt{z^2-4t} \right)=\frac{1}{2t}
\left\{ z\pm\sqrt{(z-2\sqrt{t})(z+2\sqrt{t})} \right\},
\quad t >0. 
\label{eq:G_tz_1}
\end{equation}
By the formula \eqref{eq:rho_G},
$$
\rho(t, x)=-\frac{1}{2\pi t} \Im \left\{ x\pm\lim_{\varepsilon \downarrow 0}
\sqrt{(x+\sqrt{-1} \varepsilon-2\sqrt{t})(x+\sqrt{-1} \varepsilon+2\sqrt{t})} \right\},
$$
but it should be that 
%$\lim_{x\to\pm\infty}\rho(t, x)=0$ and 
$\rho(t, x) \geq 0$, 
so we choose the lower (minus) sign.
By taking the limit we obtain
$$
\rho(t, x)=\begin{cases}
\displaystyle{\frac{1}{2 \pi t} \sqrt{4t-x^2}}, &\mnm{if }|x|<2\sqrt{t},\\
0, &\mnm{if }|x|\ge 2 \sqrt{t},
\end{cases}
$$
for $t > 0$. 
This is the time-dependent version of {\it Wigner's semicircle law} \cite{AGZ10,katori16}. 

Note that the two edges of the spectrum, $x=\pm2\sqrt{t}$, coincide with the conditions 
under which the method of characteristics breaks down
for the real characteristics.
We fix $t \in [0, \infty)$ and consider a map from $x_0 \in \mbb{R}$ to
$$
M_t(x_0):=\lim_{\varepsilon \downarrow 0}
t G(0, x_0+\sqrt{-1} \varepsilon) + x_0 \in \mbb{R}.
$$
If this map is injective for a domain $\mcl{D}_t \in \mbb{R}$, 
the real characteristic curves in the set
$\{ (s, M_s(x_0))_{0 \leq s \leq t} : x_0 \in \mcl{D}_t \}$
do not cross and to each of them corresponds a distinct value of $G$; 
$G(s, M_s(x_0))=G(0, x_0),$ $0 \leq s \leq t$.
That is, the method of characteristics works.
In the present case, \eqref{eq:G_0z_1} gives
$$
M_t(x_0)=\frac{t}{x_0}+x_0.
$$
The value of $x_0$ at which the method of characteristics breaks down,
denoted by $x_{0, \mnm{c}}$, is found by
$$
\frac{d M_t}{dx_0} (x_{0, \mnm{c}})
=-\frac{t}{x_{0, \mnm{c}}^2}+1=0
\quad \Longleftrightarrow \quad
x_{0, \mnm{c}}=\pm \sqrt{t}.
$$
The domain on $\mbb{R}$ for which the method of characteristics works
is given by
$\mcl{D}_t=\{x_0 \in \mbb{R} : |x_0| \geq |x_{0, \mnm{c}}| =\sqrt{t} \}$.
Actually, if $x_0 \in \mcl{D}_t$, \eqref{eq:G_tz_1} with the lower (minus) sign gives
\begin{eqnarray}
G(s, M_s(x_0)) &=& \frac{1}{2s} \left\{
\frac{s}{x_0}+x_0- \sqrt{\left( \frac{s}{x_0}-x_0 \right)^2 } \right\}
\nonumber\\
&=& \frac{1}{x_0}=G(0, x_0), \quad 0 \leq s \leq t.
\nonumber
\end{eqnarray}
The image of $\mcl{D}_t$ is given by
$$
\mcl{I}_t = \{ M_t(x_0) : x_0 \in \mcl{D}_t \}
=\Big\{x \in \mbb{R} : |x| \geq |M_t(x_{0, \mnm{c}})| = 2 \sqrt{t} \Big\},
$$
and the equality
$\mnm{supp}[\rho(t, \cdot)]=\mbb{R} \setminus \mcl{I}_t$
is established, $t \in [0, \infty)$. 
See Figure~\ref{fig:OneSourceBreakdown}.
\begin{figure}
\begin{center}
\begin{tikzpicture}
\draw [<->] (0,4) -- (0,0) -- (4,0);
\draw (-4,0) -- (0,0);
\draw (-0.1,2) -- (0.1,2);
\draw (2,-0.1) -- (2,0.1);
\draw (-2,-0.1) -- (-2,0.1);
\node [left] at (0,2) {2};
\node [below] at (2,0) {2};
\node [below] at (-2,0) {-2};
\draw (2,0) -- (4,4);
\draw (1.75,0) -- (3.5,49/16);
\draw (1.5,0) -- (3,2.25);
\draw (1.25,0) -- (2.5,25/16);
\draw (1,0) -- (2,1);
\draw (0.75,0) -- (3/2,9/16);
\draw (0.5,0) -- (1,0.25);
\draw (0.25,0) -- (1/2,1/16);
\draw (-2,0) -- (-4,4);
\draw (-1.75,0) -- (-3.5,49/16);
\draw (-1.5,0) -- (-3,2.25);
\draw (-1.25,0) -- (-2.5,25/16);
\draw (-1,0) -- (-2,1);
\draw (-0.75,0) -- (-3/2,9/16);
\draw (-0.5,0) -- (-1,0.25);
\draw (-0.25,0) -- (-1/2,1/16);
\draw [thick,domain=-4:4] plot (\x, {(\x/2)*(\x/2)});
\node [right] at (0,4) {$t$};
\node [below] at (4,0) {$z$};
\end{tikzpicture}
\end{center}
\caption{\label{fig:OneSourceBreakdown}Examples of real characteristics of the one-source case (straight lines) 
and limit curve for the support of $\rho(t,\cdot)$ (thick curve).}
\end{figure}
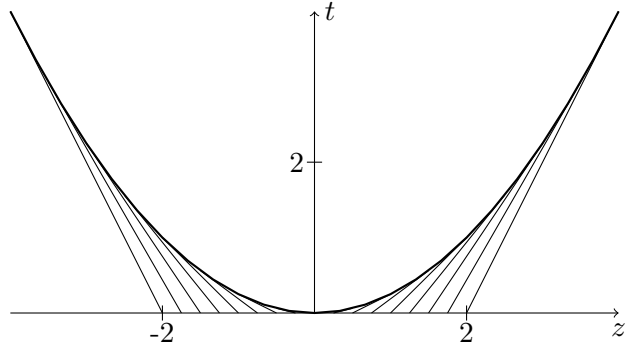

We put these observations in a more formal context in the following proposition.

%%%%%%%%%%%%%%%%%%%%%%%%%%%%%%%%%%%%%%%%%
\begin{prp}\label{prp:SuppRealChars}
At each $t \in [0, \infty)$ 
define
$$
\mcl{D}_t := \{ \mbox{$x_0 \in \mbb{R} : x=M_t(x_0)$ gives an injection to $\mbb{R}$} \}, 
\quad
\mcl{I}_t := \{ M_t(x_0) : x_0 \in \mcl{D}_t \}.
$$
Then for $t>0$,
\begin{equation}
\mnm{supp}[\rho(t, \cdot)] = \mbb{R}\setminus \mcl{I}_t.
\label{eq:statement}
\end{equation}
\end{prp}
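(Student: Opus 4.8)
The plan is to convert the geometric statement about non-crossing real characteristics into an analytic dichotomy for the boundary values of $G(t, \cdot)$, using the formula \eqref{eq:rho_G} together with the functional equation \eqref{eq:functional}. By \eqref{eq:rho_G}, a point $x$ lies in $\mbb{R} \setminus \mnm{supp}[\rho(t, \cdot)]$ precisely when $\rho(t, \cdot)$ vanishes on a neighbourhood of $x$, which by \eqref{eq:Hilbert_rho} is equivalent to the boundary value $g(x) := \lim_{\varepsilon \downarrow 0} G(t, x + \sqrt{-1}\,\varepsilon)$ being real and finite throughout that neighbourhood. Thus the whole proposition reduces to showing that the open set on which $G(t, \cdot)$ admits real boundary values coincides with $\mcl{I}_t$.

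First I would prove $\mcl{I}_t \subseteq \mbb{R} \setminus \mnm{supp}[\rho(t, \cdot)]$. Take $x = M_t(x_0)$ with $x_0 \in \mcl{D}_t$. Since $M_t(x_0) \in \mbb{R}$ by definition, the boundary value $\lim_{\varepsilon \downarrow 0} G(0, x_0 + \sqrt{-1}\,\varepsilon)$ is itself real, i.e. $\rho(0, x_0) = 0$; the constancy of $G$ along the real characteristic through $(0, x_0)$ then gives $g(x) = G(0, x_0) \in \mbb{R}$, so $\rho(t, x) = 0$ by \eqref{eq:rho_G}. Reading $\mcl{D}_t$ as an open set on which $M_t$ is an injective local diffeomorphism (so that $M_t$ is an open map), $\mcl{I}_t$ is open, whence $\rho(t, \cdot)$ vanishes on a whole neighbourhood of each of its points and $\mcl{I}_t \cap \mnm{supp}[\rho(t, \cdot)] = \emptyset$.

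For the reverse inclusion I would run the characteristics backward. If $\rho(t, \cdot) \equiv 0$ near $x$, then $g$ is real there and, by Schwarz reflection, real-analytic. Setting $x_0 := x - t\, g(x)$, the functional equation \eqref{eq:functional} gives $g(x) = G(0, x_0)$, so $G(0, \cdot)$ has a real boundary value at $x_0$ and $M_t(x_0) = t\,G(0, x_0) + x_0 = t\,g(x) + (x - t\,g(x)) = x$. It then remains to place $x_0$ in the injectivity domain: the map $x \mapsto x_0(x) = x - t\, g(x)$ is the local inverse of $M_t$, and differentiating the identity $M_t(x_0(x)) = x$ forces $x_0'(x) = 1 - t\, g'(x) \neq 0$, so $M_t$ is a local diffeomorphism near $x_0$; single-valuedness of the real solution on the connected component then yields injectivity of the relevant branch, so $x_0 \in \mcl{D}_t$ and $x \in \mcl{I}_t$. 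Combining the two inclusions gives $\mbb{R} \setminus \mnm{supp}[\rho(t, \cdot)] = \mcl{I}_t$, which is \eqref{eq:statement}.

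The main obstacle is the equivalence between the breakdown of injectivity of $M_t$ and the appearance of a strictly positive density: at a point $x \notin \mcl{I}_t$ one must rule out that any non-crossing real characteristic delivers a real value $g(x)$, thereby forcing $\Im g(x) < 0$ and hence $\rho(t, x) > 0$. This is where the branch selection in the functional equation — the analogue of the minus sign chosen in \eqref{eq:G_tz_1} — and the non-negativity of $\rho$ must be used decisively, since the critical points, where $M_t'(x_0) = 0$, are exactly the preimages of the edges of $\mnm{supp}[\rho(t, \cdot)]$. A secondary but genuine point requiring care is these edge points: because $\mnm{supp}[\rho(t, \cdot)]$ is closed while $\mcl{I}_t$ is the image of an open injective domain, \eqref{eq:statement} should be read with the critical points excluded from $\mcl{D}_t$, after which passing to closures reconciles the two sides — exactly as in the one-source case, where $M_t'(x_0) = 0$ at $x_0 = \pm\sqrt{t}$ maps to the spectral edges $x = \pm 2\sqrt{t}$.
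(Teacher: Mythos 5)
Your two--inclusion skeleton is the same as the paper's: both directions hinge on Proposition \ref{prp:HilbertAvgResolvent}, which converts membership questions into reality of the boundary value $g(x)=\lim_{\varepsilon\downarrow 0}G(t,x+\sqrt{-1}\,\varepsilon)$, and your reverse inclusion runs the characteristic backward through $x_0=x-t\,g(x)$, which is exactly the paper's map $M^-_t(x)$. Your forward inclusion is fine, and in one respect more careful than the paper's (you note that openness of $\mcl{I}_t$ is needed to pass from $\rho(t,x)=0$ to $x\notin\mnm{supp}[\rho(t,\cdot)]$). Invoking the functional equation \eqref{eq:functional} instead of re-deriving constancy of $G$ along the backward line from \eqref{eq:G2} is also a legitimate shortcut, provided you check that $z-tG(t,z)$ approaches $x_0$ from the upper half-plane (it does, since $\Im G(t,x+\sqrt{-1}\,\varepsilon)<0$), so that the limit picked out is the correct boundary value of $G(0,\cdot)$.

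The genuine gap is precisely at the step you yourself call ``the main obstacle'': placing $x_0$ in $\mcl{D}_t$. Your argument --- differentiate $M_t(x_0(x))=x$ to get $x_0'(x)\neq 0$, conclude that $M_t$ is a local diffeomorphism, then appeal to ``single-valuedness of the real solution on the connected component'' --- does not close it. First, it is circular: applying the chain rule to that identity presupposes that $M_t$ is differentiable at $x_0$, i.e.\ that the boundary value of $G(0,\cdot)$ is differentiable in a neighbourhood of $x_0$; your construction only shows that $\rho(0,\cdot)$ vanishes at the single point $x_0$, not near it. Second, even granting a local diffeomorphism, local injectivity is not the global non-crossing property that membership in $\mcl{D}_t$ (``the method of characteristics works'') requires; the sentence about single-valuedness of the relevant branch is an assertion, not an argument. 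The paper closes exactly this hole with one explicit computation that your proposal never makes: for $x\notin\mnm{supp}[\rho(t,\cdot)]$ the boundary value is the real integral $G(t,x)=\int\rho(t,u)(x-u)^{-1}\,du$, whence
$$
\frac{d}{dx}\bigl(x-sG(t,x)\bigr)
=1+s\int_{\mnm{supp}[\rho(t,\cdot)]}\frac{\rho(t,u)}{(x-u)^{2}}\,du>0,
\qquad 0<s\leq t,
$$
so the backward map is strictly increasing --- not merely of nonvanishing derivative --- for every intermediate time, and injectivity follows with no regularity assumptions on the time-zero data. Note that the positivity comes for free from $\rho\geq 0$; no branch selection of the kind used in \eqref{eq:G_tz_1} enters here, contrary to what your closing paragraph suggests. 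Your final remark about the edge points is fair: as written (e.g.\ $|x_0|\geq\sqrt{t}$ in the one-source case) the paper's $\mcl{I}_t$ is closed while $\mnm{supp}[\rho(t,\cdot)]$ is closed too, so \eqref{eq:statement} holds exactly only if the critical points are excluded from $\mcl{D}_t$; but that is a reading of the statement, not a repair of the injectivity step above.
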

%%%%%%%%%%%%%%%%%%%%%%%%%%%%%%%%%
\begin{proof}
Assume that $x \in \mcl{I}_t \subset \mbb{R}$.
Then the method of characteristics works and
$$
^{\exists}x_0 \in \mcl{D}_t \quad \mbox{s.t.} \quad
x=\lim_{\varepsilon \downarrow 0}
t G(0, x_0+\sqrt{-1} \varepsilon) + x_0.
$$
This implies that $\displaystyle{\lim_{\varepsilon \downarrow 0}
G(t, x+\sqrt{-1} \varepsilon) \in \mbb{R}}$.
By the equality \eqref{eq:Hilbert_rho} in Proposition \ref{prp:HilbertAvgResolvent},
$\rho(t, x)=0$ is concluded, that is,
$x \in \mbb{R} \setminus \mnm{supp}[\rho(t, \cdot)]$.

Conversely, assume that $x\notin \mnm{supp}[\rho(t, \cdot)]$.
Then, $\rho(t,x)=0$ and $\lim_{\varepsilon \downarrow 0}G(t, x+\sqrt{-1} \varepsilon)=G(t, x)\in\mbb{R}$
by Proposition~\ref{prp:HilbertAvgResolvent}.
Consider now the mapping
\begin{equation}
M^-_s(x):=x-sG(t, x),\ 0<s\leq t.
\end{equation}
By equation \eqref{eq:G2}, it follows that 
$$
\frac{d}{d s}G(t-s, M^-_s(x))\Big|_{s=0}=0.
$$
Therefore, $G(t-s, M^-_s(x))$ 
remains constant along the line $\{(t-s,M^-_s(x)):0<s\leq t\}$.
Note that $M^-_s(x)$ is an injective mapping, because
$$
\frac{dM^-_s(x)}{d x}=1-s\frac{d}{d x}G(t,x)=1+s\int_{\mnm{supp}[\rho(t,\cdot)]}\frac{\rho(t,u)}{(x-u)^2}du>0.
$$
This means that the method of characteristics works for $\{(t-s,M^-_s(x)):0<s\leq t\}$,
so we set $x_0=M^-_t(x)$ and find that
$$
G(t,x)=G(0,x_0),\ \mnm{and}\ x=x_0+tG(0,x_0).
$$
Consequently, $x\in\mcl{I}_t$ and the statement \eqref{eq:statement} is proved.
\end{proof}

%%%%%%%%%%%%%%%%%%%%%%%%%%%%%%%%%%%%%%%%
\subsection{Case with two sources}
%%%%%%%%%%%%%%%%%%%%%%%%%%%%%%%%%%%%%%%%

Now we consider the case where, for $a>0$,
$$
\mu(\cdot)=\frac{1}{2} \Big\{ \delta_{-a}(\cdot)+\delta_a(\cdot) \Big\}
\, \, \Longleftrightarrow \, \, 
d \mu(x)= \rho(0, x)dx =\frac{1}{2} \Big\{ \delta(x-a)+\delta(x+a) \Big\} dx.
$$
Then, the initial condition for $G(t, z)$ is given by
$$
G(0, z)=\frac{1}{2} \left( \frac{1}{z-a}+\frac{1}{z+a} \right),
$$
and \eqref{eq:functional} becomes
\begin{equation}\label{eq:TwoSourceDynG}
t^2G^3(t, z)-2ztG^2(t, z)+(z^2-a^2+t)G(t, z)-z=0.
\end{equation}
Note that, if we set $\tau:=t/a^2$ and $w:=z/a$ with $\bar G(\tau, w):=a G(a^2 \tau, a w)$, 
\eqref{eq:TwoSourceDynG} is transformed into
\begin{equation}\label{eq:NormTwoSourceDynG}
\tau^2\bar{G}^3(\tau, w)-2w\tau \bar{G}^2(\tau, w)+(w^2-1+\tau)\bar{G}(\tau, w)-w=0.
\end{equation}
Therefore, without loss of generality we solve \eqref{eq:TwoSourceDynG} for $a=1$ 
and assume that time and space are given in units of $a^2$ and $a$, respectively.

Before we solve \eqref{eq:TwoSourceDynG}, we use Proposition~\ref{prp:SuppRealChars} 
to determine the support of the particle density. The parametrization equation for the real characteristics is given by
$$
M_t(x_0)=\frac{t}{2} \left(\frac{1}{x_0-1}+\frac{1}{x_0+1} \right)+x_0=x_0 \left(1-\frac{t}{1-x_0^2} \right),
\quad t \in [0, \infty).
$$
The breakdown condition for the injective map from $x_0$ to $x=M_t(x_0)$ with $t$ fixed is
\begin{equation}\label{eq:TwoSourceCriticalCondition}
\frac{d M_t}{dx}(x_{0, \mnm{c}})=0
\quad \Longleftrightarrow \quad
t=\frac{(1-x_{0, \mnm{c}}^2)^2}{1+x_{0, \mnm{c}}^2}.
\end{equation}
We show the plot of several characteristics and the breakdown curve in Figure~\ref{fig:TwoSourceBreakdown}.
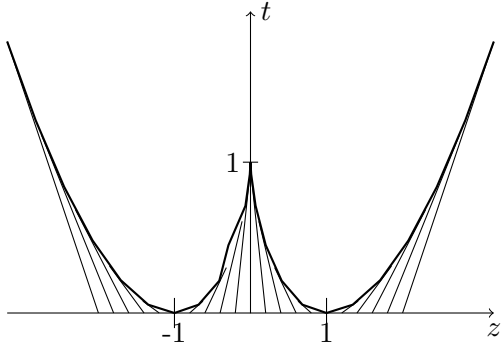
\begin{figure}
\begin{center}
\begin{tikzpicture}[yscale=2]
\draw [<->] (0,2) -- (0,0) -- (3.2,0);
\draw (-3.2,0) -- (0,0);
\draw (-0.1,1) -- (0.1,1);
\draw (1,-0.1) -- (1,0.1);
\draw (-1,-0.1) -- (-1,0.1);
\node [left] at (0,1) {1};
\node [below] at (1,0) {1};
\node [below] at (-1,0) {-1};
\draw [thick,domain=-2:2] plot ({\x*(\x*\x-1)/(1+\x*\x)+\x},{(1-\x*\x)*(1-\x*\x)/(1+\x*\x)});
\draw [domain=0:(1-0.2*0.2)*(1-0.2*0.2)/(1+0.2*0.2)] plot ({0.2*(1-\x/(1-0.2*0.2))},\x);
\draw [domain=0:(1-0.4*0.4)*(1-0.4*0.4)/(1+0.4*0.4)] plot ({0.4*(1-\x/(1-0.4*0.4))},\x);
\draw [domain=0:(1-0.6*0.6)*(1-0.6*0.6)/(1+0.6*0.6)] plot ({0.6*(1-\x/(1-0.6*0.6))},\x);
\draw [domain=0:(1-0.8*0.8)*(1-0.8*0.8)/(1+0.8*0.8)] plot ({0.8*(1-\x/(1-0.8*0.8))},\x);
\draw [domain=0:(1-1.2*1.2)*(1-1.2*1.2)/(1+1.2*1.2)] plot ({1.2*(1-\x/(1-1.2*1.2))},\x);
\draw [domain=0:(1-1.4*1.4)*(1-1.4*1.4)/(1+1.4*1.4)] plot ({1.4*(1-\x/(1-1.4*1.4))},\x);
\draw [domain=0:(1-1.6*1.6)*(1-1.6*1.6)/(1+1.6*1.6)] plot ({1.6*(1-\x/(1-1.6*1.6))},\x);
\draw [domain=0:(1-1.8*1.8)*(1-1.8*1.8)/(1+1.8*1.8)] plot ({1.8*(1-\x/(1-1.8*1.8))},\x);
\draw [domain=0:(1-2*2)*(1-2*2)/(1+2*2)] plot ({2*(1-\x/(1-2*2))},\x);
\draw [domain=0:(1-0.2*0.2)*(1-0.2*0.2)/(1+0.2*0.2)] plot ({-0.2*(1-\x/(1-0.2*0.2))},\x);
\draw [domain=0:(1-0.4*0.4)*(1-0.4*0.4)/(1+0.4*0.4)] plot ({-0.4*(1-\x/(1-0.4*0.4))},\x);
\draw [domain=0:(1-0.6*0.6)*(1-0.6*0.6)/(1+0.6*0.6)] plot ({-0.6*(1-\x/(1-0.6*0.6))},\x);
\draw [domain=0:(1-0.8*0.8)*(1-0.8*0.8)/(1+0.8*0.8)] plot ({-0.8*(1-\x/(1-0.8*0.8))},\x);
\draw [domain=0:(1-1.2*1.2)*(1-1.2*1.2)/(1+1.2*1.2)] plot ({-1.2*(1-\x/(1-1.2*1.2))},\x);
\draw [domain=0:(1-1.4*1.4)*(1-1.4*1.4)/(1+1.4*1.4)] plot ({-1.4*(1-\x/(1-1.4*1.4))},\x);
\draw [domain=0:(1-1.6*1.6)*(1-1.6*1.6)/(1+1.6*1.6)] plot ({-1.6*(1-\x/(1-1.6*1.6))},\x);
\draw [domain=0:(1-1.8*1.8)*(1-1.8*1.8)/(1+1.8*1.8)] plot ({-1.8*(1-\x/(1-1.8*1.8))},\x);
\draw [domain=0:(1-2*2)*(1-2*2)/(1+2*2)] plot ({-2*(1-\x/(1-2*2))},\x);
\node [right] at (0,2) {$t$};
\node [below] at (3.2,0) {$z$};
\end{tikzpicture}
\end{center}
\caption{\label{fig:TwoSourceBreakdown}Examples of real characteristics of the two-source case (straight lines)
and limit curve for the support of $\rho(t,\cdot)$ (thick curve). 
Note that the support becomes connected at $t=1$.}
\end{figure}
We observe that when $t\in[0,1)$, the support of the particle density is disjoint, and it is given by the expression
$$
\mnm{supp}[\rho(t, \cdot)]=\left\{ x\in\mbb{R}: \sqrt{B_-(t)} < |x| < \sqrt{B_+(t)} \right\},
$$
where
$$
B_\pm(t)=\Big[1+\frac{1}{2}\Big(t\pm\sqrt{t(t+8)}\Big)\Big]\Big[1-\frac{1}{4}\Big(t\mp\sqrt{t(t+8)}\Big)\Big]^2.
$$
Furthermore, when $t \in (1,\infty)$ its support is connected, and it is given by
$$
\mnm{supp}[\rho(t, \cdot)]=\left\{x\in\mbb{R}: 0 \leq |x| <\sqrt{B_+(t)} \right\}.
$$
Indeed, the two disjoint parts of the support join at $(t, x)=(1, 0)$. 
That they join at $x=0$ is clear from the symmetry of the system, 
and by \eqref{eq:TwoSourceCriticalCondition}, 
setting $x_{0, \mnm{c}}=0$ means that the time at which the support becomes connected is $t=1$.

The solutions to \eqref{eq:TwoSourceDynG} with $a=1$ are given by
$$
G_n(t, z)=\frac{1}{3 t^2}\Big[2zt - C_n(t, z) - \frac{t^2[z^2-3(t-1)]}{C_n(t, z)}\Big], \quad n=0,1,2,
$$
where $C_n(t, z),$ $n=0,1,2$ are the three complex cubic roots of
$$
C^3(t, z)=t^3
\left[\sqrt{27[(t-1)^3+z^2(2+5t-t^2/4)-z^4]}- \frac{z}{2} \{ 9(t+2)-2z^2 \} \right].
$$
We choose $C_n(z,t)=\mnm{e}^{2 \pi \sqrt{-1} n/3}C(t, z)$, $n=0,1,2$, 
where $C(t, z)$ is the particular cubic root of $C^3(t, z)$ 
taken so that $\sqrt[3]{-1}=\mnm{e}^{\pi \sqrt{-1}/3}$. 

Now we use the formula \eqref{eq:rho_G}.
For $t \in (0, 1),$ $x< 0$ and for $t \in [1, \infty),$ $x< \sqrt{3(t-1)}$, let
$$
\rho_\mnm{l}(t, x):=\frac{1}{2\sqrt{3}\pi t^2} \left[C_0(t, x)-\frac{t^2 \{x^2-3(t-1)\}}{C_0(t, x)} \right].
$$
In this case, $C^3(t, x)$ is positive, so $C_0(t, x) >0$. 
For $t \in (0, 1),$ $x \geq 0$ and for $t \in [1, \infty),$ $x \geq \sqrt{3(t-1)}$, let
$$
\rho_\mnm{r}(t, x):=\frac{1}{2\sqrt{3}\pi t^2}\left[C_1(t, x)-\frac{t^2 \{x^2-3(t-1)\} }{C_1(t, x)} \right].
$$
In this case, $C^3(t, x) <0$, and its cubic root has a phase of $\pi/3$. 
By choosing $n=1$, we add a phase of $2\pi/3$ 
to obtain a negative cubic root of $C^3(t, x)$ 
and obtain the positive function $\rho_\mnm{r}$.
The density function is determined as follows.\\
When $t \in (0, 1)$, 
$$
\rho(t, x) \nonumber\\
=\begin{cases}
\rho_\mnm{l}(t,x), 
&\mnm{if } - \sqrt{B_+(t)} < x < - \sqrt{B_-(t)}, \\
\rho_\mnm{r}(t, x),
&\mnm{if } \sqrt{B_-(t)} < x < \sqrt{B_+(t)}, \\
0,
&\mnm{if } 0 \leq |x| \leq \sqrt{B_-(t)} \quad \mnm{ or } |x| \geq \sqrt{B_+(t)}, 
\end{cases}
$$
and when $t \in (1, \infty)$, 
$$
\rho(t, x) \nonumber\\
=\begin{cases}
\rho_\mnm{l}(t,x), 
&\mnm{if } - \sqrt{B_+(t)} < x < \sqrt{3(t-1)}, \\
\rho_\mnm{r}(t, x),
&\mnm{if } \sqrt{3(t-1)} < x < \sqrt{B_+(t)}, \\
0,
&\mnm{if } |x| \geq \sqrt{B_+(t)}. 
\end{cases}
$$
The profiles of $\rho(t, \cdot)$ are plotted 
for several values of time in Figure~\ref{fig:TwoSourcesDynDensity}.
%%%%%%%%%%%%%%%%%%%%%%%%%%%%%%%%%%%%%%%%%%%%%%%%
\begin{figure}
\begin{center}
\includegraphics[width=0.5\textwidth]{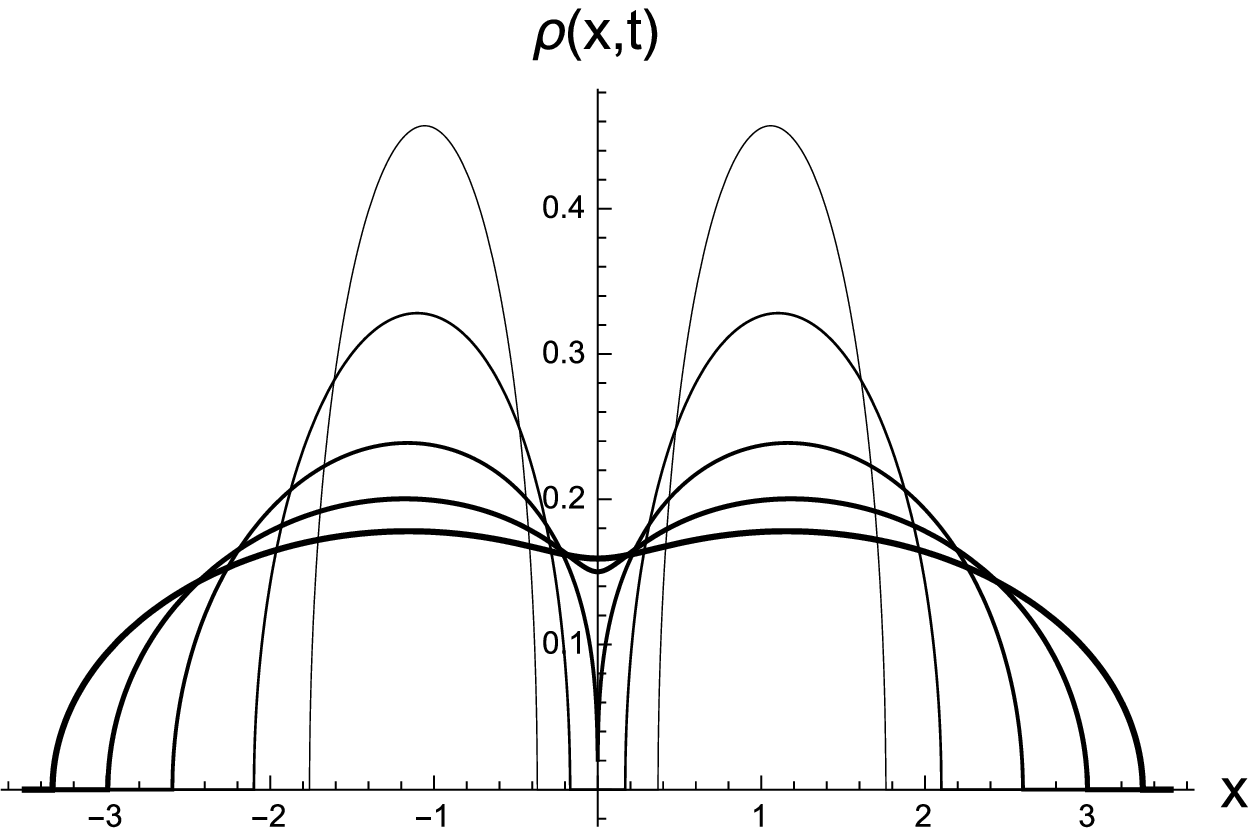}
\end{center}
{\caption{\label{fig:TwoSourcesDynDensity}}
Density functions with the two sources
at $x=\pm 1$ for $t=0.25$ (the thinnest curve), 0.5, 1, 1.5 
and 2 (the thickest curve).}
\end{figure}
%%%%%%%%%%%%%%%%%%%%%%%%%%%%%%%%%%%%%%%%%%%%%%%%%

The density at time $t=1$ is of particular interest, as it is the the point in time where the support of $\rho(t, \cdot)$ 
becomes connected after being disjoint for $t \in (0, 1)$. For that case, \eqref{eq:NormTwoSourceDynG} becomes
$$
G^3(1, z)-2zG^2(1, z)+z^2G(1, z)-z=0.
$$
From our previous considerations, it can be shown that the above solution
gives
\begin{eqnarray}
&& \rho(1, x) \nonumber\\
&& =\begin{cases}
\displaystyle{
\frac{3}{4 \pi}\left(\frac{2|x|}{3\sqrt{3}} \right)^{1/3}
\left[\left(1+\sqrt{1-\frac{4 x^2}{27}} \right)^{2/3}-\left(1-\sqrt{1-\frac{4 x^2}{27}} \right)^{2/3} \right]
}, 
&\mnm{if }\displaystyle{|x|<\frac{3 \sqrt{3}}{2}},\\
0,  &\mnm{if }\displaystyle{|x| \geq \frac{3 \sqrt{3}}{2}}.
\end{cases}
\nonumber
\end{eqnarray}
This coincides with the density function given as (6.118) in Section 6.5 
of \cite{nadalthesis} if we set $L=3\sqrt{3}/2$. 
This form shows how the disjoint parts of the density meet at the origin when $t=1$, 
because the density behaves as $\sqrt{3}|x|^{1/3}/2\pi$ for small $x$. 
See also Section 2.3 of \cite{warcholthesis}. 

\begin{acknowledgments}
MK is supported in part by the Grant-in-aid for Scientific Research (C)
(No.26400405) of the Japan Society for the Promotion of Science.
\end{acknowledgments}

%%%%%%%%%%%%%%%%%%%%%%%%%%%%%%%%%%%%%%%%%%%%%%%%%

\end{document}